
\documentclass[11pt,reqno]{amsart}
\usepackage{latexsym,amssymb,amsmath,multicol,rotating, lscape}

\makeatletter
\@namedef{subjclassname@2020}{%
  \textup{2020} Mathematics Subject Classification}
\makeatother

\textwidth=16cm
\hoffset=-1.5cm
 2

\usepackage{longtable}
\newtheorem{thm}{Theorem}[section]

\newtheorem{cor}[thm]{Corollary}
\newcommand{\thmref}[1]{Theorem~\ref{#1}}

\newcommand{\corref}[1]{Corollary~\ref{#1}}

\theoremstyle{remark}
\newtheorem{rmk}{Remark}[section]

\begin{document}

\title[extension of Ramanujan-Serre derivative map]
{A Simple Extension of Ramanujan-Serre Derivative Map and Some Applications}
 \author{B. Ramakrishnan, Brundaban Sahu and Anup Kumar Singh}
\address[B. Ramakrishnan]{Indian Statistical Institute, North-East Centre, Punioni, Solmara\\ 
Tezpur - 784 501, Assam, India} 
\address[Brundaban Sahu and Anup Kumar Singh]
{School of Mathematical Sciences, National Institute of Science Education and Research, An OCC of  Homi Bhabha National Institute, Bhubaneswar,  
Via: Jatni, Khurda, Odisha - 752 050, India.}
\email[B. Ramakrishnan]{ramki@isine.ac.in, b.ramki61@gmail.com}
\email[Brundaban Sahu]{brundaban.sahu@niser.ac.in}
\email[Anup Kumar Singh]{anupsinghmath@niser.ac.in, anupsinghmath@gmail.com}

\subjclass[2020]{Primary 11F11, 11F25; Secondary 11A25}

\keywords{sum of divisor functions, convolution sums, modular forms of integral weight}

\begin{abstract} 
If $f(z)$ is a modular form of weight $k$, then the differential operator $\vartheta_k$ defined by 
$\vartheta_k(f) = \frac{1}{2\pi i} \frac{d}{dz}f(z) - \frac{k}{12} E_2(z) f(z)$ (known as the Ramanujan-Serre derivative map) 
is a modular form of weight $k+2$. In this paper, we obtain a simple extension of this map and use it to get 
a general method to derive certain convolution sums of the divisor functions (using the theory of modular forms). 
Explicit expressions are given for four types of convolution sums and we provide many examples for all these types 
of sums.
\end{abstract}


\maketitle
\section{Introduction}
Let $\mathbb N$ denote the set of positive integers. For $r, n\in \mathbb N$, let 
$\sigma_r(n) = \displaystyle{\sum_{d\vert n, d\in \mathbb N} d^r}$ be the divisor function. If $n$ is not a positive integer, set $\sigma_r(n) =0$ 
and we write $\sigma(n)$ for $\sigma_1(n)$. 
For $a,b, r,s, n \in \mathbb N$, we define $W^{r, s}_{a,b}(n)$ by 
\begin{equation}
W^{r, s}_{a,b}(n) := \displaystyle{\sum_{l,m \in \mathbb N \atop{al+bm=n}}} \sigma_r(l)\sigma_s(m).
\end{equation}
These sums are referred to as the convolution sums of the divisor functions. When $r=s=1$, it is denoted simply by $W_{a, b}(n)$. Further, we write $W_{1,a}(n) = W_{a,1}(n) = W_a(n)$. Evaluation of these sums has a long history, going back to the works of Besge, Glaisher and Ramanujan \cite{{besge}, {glaisher}, {ramanujan}}. In the literature there are various methods used to obtain these convolution sums namely, elementary evaluation, using the theory of modular forms and quasimodular forms and also using $(p,k)$ parametrization etc. We refer to the book by K. S. Williams \cite{williams-book} for more details about the history of this problem. In this paper, we look at a basic result in the theory of modular forms and present a simple extension. 

Let $f(z)$ be a modular form of weight $k$ for the full modular group and let $E_2(z)$ be the weight $2$ Eisenstein series, which is a quasimodular form. An easy computation shows that the function $\frac{1}{2\pi i} \frac{d}{dz}f(z) - \frac{k}{12} E_2(z) f(z)$ is a modular form of weight $k+2$ for the full modular group.
The result also holds when $f$ is a modular form with level (i.e., with respect to the congruence subgroup $\Gamma_0(N)$). 
If $f$ is a cusp form, then the resulting function is also a cusp form. Let us denote this map by $\vartheta_k$ and write 
$$
\vartheta_k(f)(z) = Df(z) - \frac{k}{12} E_2(z)f(z),
$$
where $D = \frac{1}{2\pi i} \frac{d}{dz}$. This is called as the Ramanujan-Serre derivative map. 

By taking the logarithmic derivative of  $\Delta(z)$, the normalized cusp form of weight $12$, Ramanujan \cite{ramanujan} derived the following recursion formula for $\tau(n)$, the $n$-th Fourier coefficient of $\Delta(z)$ (for $n\ge 2$):
\begin{equation}\label{tau}
\tau(n) = \frac{24}{1-n}\sum_{m=1}^{n-1} \sigma(m) \tau(n-m).
\end{equation}
Note that applying $\vartheta_{12}$ on $\Delta(z)$ gives a cusp form of weight $14$ on $SL(2, {\mathbb Z})$ and therefore it must be zero. i,e., $D\Delta(z) = E_2(z) \Delta(z)$. Therefore, the above recursion formula also follows from this fact.  
In the same work \cite{ramanujan}, Ramanujan extensively studied the convolution sums $W_{1,1}^{r,s}(n)$, where 
$r, s$ are odd positive integers. He expressed $W_{1,1}^{r,s}(n)$ as a sum of a main term and an error term, where the main term (arising from Eisenstein series) was given explicitly. The error term corresponds to the coefficients coming from cusp forms. For some cases of $(r, s)$ he also obtained explicit expressions for these convolution sums. 

In recent years, there have been many works on explicit evaluation of these convolution sums using various methods: 
elementary evaluation using some combinatorial results, using the theory of modular forms or quasimodular forms and 
using the $(p,k)$ parametrization etc. In this paper, we observe that the Ramanujan-Serre differential operator can be extended in a simple way which can be used to evaluate the convolution sums in which one of the divisor functions is 
$\sigma(n)$, i.e., when one of the integers $r$ or $s$ is equal to $1$. If both are odd integers greater than 1, then 
$E_{r+1}(az)$ and $E_{s+1}(bz)$ are modular forms  (see \eqref{ek} for the definition) of weights $r+1$ and $s+1$ 
respectively and so theproduct $E_{r+1}(az) E_{s+1}(bz)$ is a modular form of weight $r+s+2$ for the group $\Gamma_0(ab)$. 
The required convolution sum $W_{a,b}^{r,s}(n)$ follows by expressing this product as a linear combination of basis elements and then comparing the $n$-th Fourier coefficients.  So, our method in this work is needed only when one of the numbers $r$ or $s$ is equal to 1. Applying the extended Ramanujan-Serre derivative map on modular forms (which involves the Eisenstein series $E_2(z)$), gives rise to a modular form and so one can use a basis for the space of modular forms for the purpose of evaluating the corresponding convolution sums. 

Ramanujan also proved that $E_2^2(z) - 12 DE_2(z)$ is a modular form of weight $4$, which implies that $E_2^2(z) - 12 DE_2(z) = E_4(z)$. In our work, we also give a simple extension of this result, from which one can 
evaluate the convolution sums $W_{a,b}(n)$ for any $a,b$ in terms of a basis for the space of modular forms of weight $4$ and level $ab$ (here it is enough to consider the case when $\gcd(a,b)=1$).   The advantage of our method is that it avoids the use 
of the structure of quasimodular forms space in evaluating convolution sums involving the divisor function $\sigma(n)$, which is (up to a constant) the $n$-th Fourier coefficient of the quasimodular form $E_2(z)$. 

More precisely, first we show that for positive integers $a, b$, the differential operator $\vartheta_{k;(a,b)}$ defined by 

\begin{equation}\label{theta}
\vartheta_{k;(a,b)}(f) :=  b Df(bz) - \frac{k}{12} a E_2(az) f(bz)
\end{equation}
maps the space $M_k(M,\chi)$ into the space $M_{k+2}(N,\chi)$, where $N = l.c.m(Mb,a)$. (For the definition of the vector space of modular forms, we refer to \S 2.)

\begin{rmk}
To simplify the notation, we will be writing $b Df(bz)$ as ${\mathcal D}f(bz)$. If $f(z)$ has the Fourier expansion $\sum_{n\ge 0} a_f(n) e^{2\pi inz}$, then the Fourier expansion of ${\mathcal D}f(bz)$ is given by 
$$
{\mathcal D}f(bz)  = \sum_{n\ge 0} n ~a_f(n/b) e^{2\pi inz} = b Df(bz).
$$
We also observe that the effect of the  operator ${\mathcal D}$ on the Fourier expansion  is consistent with the SAGE calculations. When $b=1$, both the operators $D$ and ${\mathcal D}$ are the same. 
\end{rmk}

In view of the above remark, we modify the definition of $\vartheta_{k;(a,b)}$ as follows: 

\begin{equation*}
\hskip 3.5cm \vartheta_{k;(a,b)}(f) =   {\mathcal D}f(bz) - \frac{k}{12} a E_2(az) f(bz). \hskip 4cm  \qquad \quad \hfil (3)'
\end{equation*}
When $a=b=1$, then $\vartheta_{k;(1,1)}$ is nothing but $\vartheta_k$. 
In particular, by taking $f(z) = E_k(z)$, the weight $k$ Eisenstein series for the full modular group, one obtains the convolution sums $W_{a,b}^{1,k-1}(n)$ using a basis for the space $M_{k+2}(N)$. In section \S 4.1, we give some examples using our main result. 

Our next result is about extending the fact that $E_2^2(z) - 12 DE_2(z)$ is a modular form of weight $4$. 
When $a, b$ are relatively prime, we show that the function 

\begin{equation}\label{e2}
 \frac{6a}{b} DE_2(az) + \frac{6b}{a} DE_2(bz) - E_2(az) E_2(bz)
\end{equation}
is a modular form in the space $M_4(ab)$. In terms of the operator ${\mathcal D}$, the above is equivalent to 
\begin{equation*}
\hskip 4.5cm \frac{6}{b} {\mathcal D}E_2(az) + \frac{6}{a} {\mathcal D}E_2(bz) - E_2(az) E_2(bz). \hskip 4.5cm (4)'
\end{equation*}

As an application, we obtain the convolution sum 
$W_{a,b}(n)$ by using a basis for the space $M_4(ab)$. We provide some examples demonstrating our method in section 
\S 4.2.  
We remark here that in \cite{aygin}, Aygin also evaluated the convolution sums $W_{1,p}(n)$, $W_{p_1,p_2}(n)$, $W_{1,p_1p_2}(n)$, where $p, p_1,p_2$ are prime numbers. The method used in this work is different from our method, though both of these works make use of the fact that the resulting function is a modular form. In \cite{aygin}, the author used the fact that $E_2(z) - t E_2(tz)$ is a modular form of weight $2$ on $\Gamma_0(t)$, where $t\in {\mathbb N}$. It is also to be noted that in \cite{aygin}, the author  obtained the required convolution sums by using  the already known sum $W_{1}(n)$.

In \S 3, we use the operator $\vartheta_{k+2;(a,1)}$ on the modular form of weight $k+2$ given by \eqref{theta} (with $f(z) = E_k(z)$) and the operator $\vartheta_{4;(a,1)}$ on the modular form of weight $4$ given by \eqref{e2} to get 
explicit expressions for the following convolution sums:
\begin{equation}
\sum_{l,m \in {\mathbb N}\atop{al+bm=n}} l \sigma(l)\sigma_{k-1}(m);\quad \sum_{l,m \in {\mathbb N}\atop{al+bm=n}} l \sigma(l)\sigma(m),
\end{equation}
respectively. We give some explicit examples in \S 4.3 and \S4.4.

\bigskip

\section{Preliminaries and main results}

Let $k,M\in {\mathbb N}$ and $M_k(M,\chi)$ (resp. $S_k(M,\chi)$) denote the finite dimensional vector space of modular forms (resp. cusp forms) of weight $k$ on $\Gamma_0(M)$, where $\chi$ is a Dirichlet character such that $\chi(-1) = (-1)^k$. When $M=1$, the space is denoted as $M_k$ (resp. $S_k$) and when $\chi$ is the principal character, the spaces are denoted as $M_k(M)$ and $S_k(M)$ respectively. The dimension of the vector space $M_k(M)$ is denoted by $\lambda_k(M)$.  
For an even integer $k\ge 4$, let 
\begin{equation}\label{ek}
E_k(z) = 1 - \frac{2k}{B_k} \sum_{n\ge 1} \sigma_{k-1}(n) q^n,
\end{equation}
be the Eisenstein series in $M_k$, where $q = e^{2\pi iz}$ and $B_k$ is the $k$-th Bernoulli number. For $k=2$, 
we denote the weight $2$ Eisenstein series by $E_2(z)$, which is given by 
\begin{equation}\label{quasi}
E_2(z) = 1 - 24 \sum_{n\ge 1} \sigma(n) q^n.
\end{equation}
It is a quasimodular form for the full modular group. For the basic theory of modular forms, we refer to \cite{serre}. 
 
\smallskip
The following is the main result of this paper. 

\begin{thm}\label{main}
Let $a, b$ be positive integers and let $f$ be a modular form in $M_k(M,\chi)$, where 
$k,M$ are positive integers, $\chi$ is a Dirichlet character modulo $M$ such that $\chi(-1) = (-1)^k$. 
Then $\vartheta_{k;(a,b)}(f)$ is a modular form in $M_{k+2}(N, \chi)$, where $N = l.c.m (Mb,a)$ and $\vartheta_{k;(a,b)}$ is the extended operator defined by \eqref{theta}$'$. Furthermore, if $a,b$  are relatively prime, the function 
$\frac{6a}{b} DE_2(az) + \frac{6b}{a} DE_2(bz) -E_2(az) E_2(bz) =  \frac{6}{b} {\mathcal D}E_2(az) + \frac{6}{a} {\mathcal D}E_2(bz) - E_2(az) E_2(bz)$ is a modular form in the space $M_4(ab)$. 
\end{thm}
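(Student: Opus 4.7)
The plan is to reduce both assertions to two standard tools: (i) the classical Ramanujan--Serre theorem, which says that $\vartheta_k$ maps $M_k(\Gamma_0(N'),\chi')$ into $M_{k+2}(\Gamma_0(N'),\chi')$ at any level $N'$ and any character $\chi'$; and (ii) the standard quasimodularity consequence that $E_2(z) - t E_2(tz) \in M_2(\Gamma_0(t))$ for every positive integer $t$.

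For the first assertion, I would set $g(z):=f(bz)$ and begin by verifying that $g\in M_k(\Gamma_0(Mb),\chi)$. This is immediate: for $\begin{pmatrix}\alpha&\beta\\c&d\end{pmatrix}\in\Gamma_0(Mb)$, the matrix $\begin{pmatrix}\alpha & b\beta\\ c/b & d\end{pmatrix}$ has determinant $1$ and lies in $\Gamma_0(M)$, so the transformation law of $f$ transports to $g$ without disturbing $\chi$. By the chain rule, $\mathcal{D}f(bz)=Dg(z)$, and therefore
$$\vartheta_{k;(a,b)}(f)(z)\;=\;Dg(z)-\frac{ka}{12}E_2(az)\,g(z)\;=\;\vartheta_k(g)(z)+\frac{k}{12}\bigl(E_2(z)-aE_2(az)\bigr)g(z).$$
The first summand lies in $M_{k+2}(\Gamma_0(Mb),\chi)$ by (i); the factor $E_2(z)-aE_2(az)$ lies in $M_2(\Gamma_0(a))$ by (ii), so the second summand lies in $M_{k+2}(\Gamma_0(N),\chi)$ with $N=\mathrm{lcm}(Mb,a)$. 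Summing establishes the claim.

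For the second assertion, I would leverage Ramanujan's identity $E_2(z)^2-12\,DE_2(z)=E_4(z)$. Substituting $z\mapsto tz$ gives $(DE_2)(tz)=\tfrac{1}{12}(E_2(tz)^2-E_4(tz))$. Applying this to both $(DE_2)(az)$ and $(DE_2)(bz)$ in
$$F(z):=\frac{6a}{b}(DE_2)(az)+\frac{6b}{a}(DE_2)(bz)-E_2(az)E_2(bz)$$
and completing the square in $X=E_2(az)$, $Y=E_2(bz)$ yields
$$F(z)\;=\;-\frac{a}{2b}E_4(az)-\frac{b}{2a}E_4(bz)+\frac{1}{2ab}\bigl(aE_2(az)-bE_2(bz)\bigr)^2.$$
Now $E_4(az),E_4(bz)\in M_4(\Gamma_0(ab))$ trivially. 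Moreover, the telescoping relation $aE_2(az)-bE_2(bz)=\bigl(E_2(z)-bE_2(bz)\bigr)-\bigl(E_2(z)-aE_2(az)\bigr)$ writes this quantity as a difference of elements of $M_2(\Gamma_0(b))$ and $M_2(\Gamma_0(a))$ by (ii); since $\gcd(a,b)=1$, so $\mathrm{lcm}(a,b)=ab$, the difference sits in $M_2(\Gamma_0(ab))$, and its square in $M_4(\Gamma_0(ab))$. Combining gives $F\in M_4(\Gamma_0(ab))$.

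The main conceptual content to notice is that the apparently fractional coefficients $\tfrac{a}{2b}$ and $\tfrac{b}{2a}$ in the $E_2$-bilinear part of $F$ conspire into a perfect square $\tfrac{1}{2ab}(aE_2(az)-bE_2(bz))^2$, and it is precisely the coprimality $\gcd(a,b)=1$ that confines the resulting weight-$2$ factor to $\Gamma_0(ab)$ rather than a larger-index overgroup. Holomorphy at every cusp is inherited automatically from that of $E_2,E_4$ and $f$, since the operations $D$, multiplication, and scaling $z\mapsto tz$ all preserve holomorphy cusp-by-cusp.
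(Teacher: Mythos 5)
Your proof is correct, but it proceeds by a genuinely different route from the paper. The paper verifies the transformation laws from scratch: for $A\in\Gamma_0(N)$ it conjugates to matrices in $\Gamma_0(M)$ and $SL(2,\mathbb Z)$, differentiates the transformation formula for $f(bz)$ and the quasimodular transformation of $E_2(az)$, and checks directly that the combination $bDf(bz)-\frac{ka}{12}E_2(az)f(bz)$ (and, after a second differentiation of the $E_2$ law, the weight-$4$ combination) picks up the automorphy factor $\chi(\delta)(\gamma z+\delta)^{k+2}$, resp.\ $(\gamma z+\delta)^4$. You instead reduce everything to standard black boxes: the level-$N$ Ramanujan--Serre theorem applied to $g(z)=f(bz)$, the fact that $E_2(z)-tE_2(tz)\in M_2(\Gamma_0(t))$, and Ramanujan's identity $E_2^2-12DE_2=E_4$, the last of which lets you rewrite the weight-$4$ function as $\frac{1}{2ab}\bigl(aE_2(az)-bE_2(bz)\bigr)^2-\frac{a}{2b}E_4(az)-\frac{b}{2a}E_4(bz)$, a sum of manifestly modular pieces. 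Your decomposition is legitimate (the needed special cases are exactly the facts the paper itself quotes as known, so there is no circularity) and has the bonus of producing an explicit Eisenstein-type expression for the weight-$4$ form, whereas the paper's computation is self-contained and needs no prior level-$N$ version of the $\vartheta_k$ result; in spirit your use of $E_2(z)-tE_2(tz)$ is closer to Aygin's method, which the paper explicitly contrasts with its own. One small correction: coprimality is not what keeps $\bigl(aE_2(az)-bE_2(bz)\bigr)^2$ inside $M_4(\Gamma_0(ab))$, since $\mathrm{lcm}(a,b)\mid ab$ always and so $M_2(\Gamma_0(\mathrm{lcm}(a,b)))\subseteq M_2(\Gamma_0(ab))$ in any case; indeed the paper remarks that the coprimality hypothesis is not needed for the second statement and is only imposed for the applications.
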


Before proceeding to the proof of this theorem, we shall make some observations and deduce some applications. 

\begin{rmk}
As observed earlier, when $a = b =1$, the differential operator $\vartheta_{k;(1,1)}$ is nothing but $\vartheta_k$. In this case, our second result in the above theorem is nothing but the fact that $12 DE_2(z) - E_2^2(z) = - E_4(z)$, as proved by Ramanujan.  Though there is no need to assume relatively prime condition in the second part of the theorem, for the application part, it is 
sufficient to assume this condition. 
\end{rmk}

\smallskip

Let $\{f_i(z)| 1\le i\le \lambda_{k+2}(ab)\}$ be a basis of the vector space $M_{k+2}(ab)$ and denote the $n$-th Fourier coefficient of $f_i(z)$ by $a_i(n)$. Here $a, b$ are relatively prime positive integers. Using our main theorem, we obtain the following corollaries. By taking $f(z) = E_k(z)$ (with $k\ge 4$ and an even integer) and assuming that  $a$ and $b$ are relatively prime positive integers in the above theorem, we obtain the convolution sum $W_{a,b}^{1,k-1}(n)$ in terms of the $a_i(n)$.  

\begin{cor}\label{1-k-1}
Let $k\ge 4$ be an even integer and $f(z) =E_k(z)$. If $a, b$ are relatively prime positive integers, then there exist 
constants $\alpha_i\in {\mathbb C}$, $1\le i\le \lambda_{k+2}(ab)$ such that 
\begin{equation}\label{1:k-1}
W_{a,b}^{1,k-1}(n) = \frac{B_k}{2k} \sigma(n/a) +  \left(\frac{1}{24} - \frac{n}{2ka}\right) \sigma_{k-1}(n/b) - \frac{B_k}{4ak^2}\sum_{i=1}^{\lambda_{k+2}(ab)} \alpha_i a_i(n),
\end{equation}
where $B_k$ is the $k$-th Bernoulli number and $a_i(n)$, $1\le i\le \lambda_{k+2}(ab)$ is the $n$-th Fourier coefficient 
of the $i$-th basis element $f_i$ of $M_{k+2}(ab)$ as denoted above. 
\end{cor}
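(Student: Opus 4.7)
The plan is to apply the extended operator $\vartheta_{k;(a,b)}$ to $f = E_k$ and read off $W_{a,b}^{1,k-1}(n)$ by comparing $n$-th Fourier coefficients with those of a basis expansion in $M_{k+2}(ab)$. By \thmref{main}, applied with $M = 1$ and trivial character, $\vartheta_{k;(a,b)}(E_k)$ lies in $M_{k+2}(\mathrm{lcm}(b,a)) = M_{k+2}(ab)$ under the coprimality assumption $\gcd(a,b) = 1$. Hence there exist $\alpha_1, \dots, \alpha_{\lambda_{k+2}(ab)} \in \C$ with
\[
\vartheta_{k;(a,b)}(E_k) \;=\; \sum_{i=1}^{\lambda_{k+2}(ab)} \alpha_i f_i.
\]

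The core of the argument is to compute the Fourier expansion of the left-hand side explicitly. The Remark gives $\mathcal{D}E_k(bz) = -\frac{2k}{B_k}\sum_{n \ge 1} n\,\sigma_{k-1}(n/b)\,q^n$, and multiplying the Fourier series \eqref{ek} and \eqref{quasi} (with $z$ replaced by $bz$ and $az$, respectively) produces the product $E_2(az)E_k(bz)$. The key observation is that the mixed convolution $\sum_{l+m=n}\sigma(l/a)\sigma_{k-1}(m/b)$ appearing in this product collapses, via the substitutions $l = al'$ and $m = bm'$, to exactly $W_{a,b}^{1,k-1}(n)$, thanks to the convention $\sigma_r(\cdot) = 0$ on non-positive-integer arguments. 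Assembling the pieces, the $n$-th Fourier coefficient of $\vartheta_{k;(a,b)}(E_k)$ for $n \ge 1$ simplifies to
\[
\frac{k^2 a}{6 B_k}\,\sigma_{k-1}(n/b) \;-\; \frac{2kn}{B_k}\,\sigma_{k-1}(n/b) \;+\; 2ka\,\sigma(n/a) \;-\; \frac{4k^2 a}{B_k}\,W_{a,b}^{1,k-1}(n).
\]

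To finish, I would equate this expression with $\sum_i \alpha_i a_i(n)$ and solve for $W_{a,b}^{1,k-1}(n)$; multiplying through by $\frac{B_k}{4k^2 a}$ rearranges the identity into \eqref{1:k-1} verbatim. No real obstacle is expected: once the modular-form membership from \thmref{main} is in place, the argument is essentially constant-bookkeeping together with the elementary re-indexing of the convolution. The same template -- apply $\vartheta_{k;(a,b)}$ to a suitable Eisenstein series, compute both sides, match coefficients -- will clearly underlie the further convolution identities advertised in \S 3 and \S 4.
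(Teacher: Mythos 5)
Your proposal is correct and follows essentially the same route as the paper: apply $\vartheta_{k;(a,b)}$ to $E_k$, invoke \thmref{main} (with $M=1$, so $N=\mathrm{lcm}(b,a)=ab$), expand in the basis $\{f_i\}$ of $M_{k+2}(ab)$, and compare $n$-th Fourier coefficients; your intermediate coefficient expression matches the paper's identity exactly, and the final rearrangement gives \eqref{1:k-1}.
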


\begin{proof}
When $f(z) = E_k(z)$, then the function ${\mathcal D}E_k(bz) - \frac{k}{12} a E_2(az) E_k(bz)$ belongs to $M_{k+2}(ab)$. 
Let  us denote a basis of $M_{k+2}(ab)$ by $f_i(z)$, $1\le i\le \lambda_{k+2}(ab)$, whose $n$-th Fourier coefficients are given by $a_i(n)$. Then the above modular form can be expressed as a linear combination of these basis elements.  Therefore, there exists constants $\alpha_i$, $1\le i\le \lambda_{k+2}(ab)$ such that 
\begin{equation*}
{\mathcal D}E_k(bz) - \frac{k}{12} a E_2(az) E_k(bz) = \sum_{i=1}^{\lambda_{k+2}(ab)} \alpha_i f_i(z).
\end{equation*}
Now,  by comparing the $n$-th Fourier coefficients on both sides of the above identity, we get 
\begin{equation*}
-\frac{2k}{B_k} n\sigma_{k-1}(n/b) +2ka \sigma(n/a) + \frac{k^2a}{6 B_k} \sigma_{k-1}(n/b) - \frac{4k^2 a}{B_k} W_{a,b}^{1,k-1}(n) = \sum_{i=1}^{\lambda_{k+2}(ab)} \alpha_i a_i(n).
\end{equation*}
Simplifying, we get 
\begin{equation*}
W_{a,b}^{1,k-1}(n) = \frac{B_k}{2k} \sigma(n/a) + \big(\frac{1}{24} - \frac{n}{2ka}\big) \sigma_{k-1}(n/b) - \frac{B_k}{4ak^2}\sum_{i}^{\lambda_{k+2}(ab)} \alpha_i a_i(n).
\end{equation*}
\end{proof}

Let $g_i(z)$, $1\le i\le \lambda_4(ab)$ denote a basis of $M_4(ab)$. 
By using the second part of the above theorem, expressing \eqref{e2} as a linear combination of $g_i(z)$ and comparing the $n$-th Fourier coefficients, we obtain the following corollary. 

\begin{cor}\label{wab}
If $a, b$ are relatively prime positive integers, then there exist constants $\beta_i\in {\mathbb C}$, $1\le i\le \lambda_{4}(ab)$ such that 
\begin{equation}
W_{a,b}(n) = \frac{1}{24}\big(1-\frac{6n}{b}\big)\sigma(n/a) + \frac{1}{24}\big(1-\frac{6n}{a}\big) \sigma(n/b) - \frac{1}{576} \sum_{i=1}^{\lambda_{4}(ab)} \beta_i b_i(n),
\end{equation}
where $b_i(n)$, $1\le i\le \lambda_4(ab)$ is the $n$-th Fourier coefficient of the $i$-th basis element $g_i$ of $M_4(ab)$ as denoted above. 
\end{cor}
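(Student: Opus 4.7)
The plan is to mimic the strategy used in the proof of \corref{1-k-1}, applied now to the weight--$4$ modular form furnished by the second part of \thmref{main}. Since $\gcd(a,b)=1$, that theorem tells us that
$$
F(z) := \frac{6}{b}\,\mathcal{D}E_2(az) + \frac{6}{a}\,\mathcal{D}E_2(bz) - E_2(az)\,E_2(bz)
$$
lies in $M_4(ab)$. Since $\{g_i\}_{1\le i\le \lambda_4(ab)}$ is a basis of this space, there exist $\beta_i \in \mathbb{C}$ with $F(z) = \sum_{i=1}^{\lambda_4(ab)} \beta_i\, g_i(z)$. The conclusion will then be obtained by computing the Fourier expansion of $F$ directly and comparing the $n$-th coefficient with $\sum_i \beta_i b_i(n)$.

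For the Fourier side, I would use \eqref{quasi} together with the formula from the remark, giving
$$
\mathcal{D}E_2(az) = -24 \sum_{n\ge 1} n\,\sigma(n/a)\, q^n,\qquad \mathcal{D}E_2(bz) = -24 \sum_{n\ge 1} n\,\sigma(n/b)\, q^n,
$$
under the convention that $\sigma(x)=0$ when $x$ is not a positive integer. For the product $E_2(az)E_2(bz)$, the Cauchy product of $1-24\sum \sigma(m/a)q^m$ and $1-24\sum \sigma(l/b)q^l$ gives the $n$-th coefficient ($n\ge 1$) as
$$
-24\sigma(n/a) - 24\sigma(n/b) + 576 \sum_{\substack{m+l=n\\ m,l\ge 1}} \sigma(m/a)\sigma(l/b),
$$
and the last sum is precisely $W_{a,b}(n)$ (after reindexing via $m=al'$, $l=bm'$ and using the vanishing convention on $\sigma$).

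Assembling the three pieces, the $n$-th Fourier coefficient of $F(z)$ equals
$$
24\left(1 - \tfrac{6n}{b}\right)\sigma(n/a) + 24\left(1 - \tfrac{6n}{a}\right)\sigma(n/b) - 576\, W_{a,b}(n).
$$
Equating this with $\sum_i \beta_i b_i(n)$ and isolating $W_{a,b}(n)$ (dividing by $576$) yields the stated identity.

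There is no genuine obstacle: once \thmref{main} is in hand, the argument is a bookkeeping computation. The only point requiring care is the correct interpretation of the vanishing convention for $\sigma(n/a)$ and $\sigma(n/b)$ so that the Cauchy product cleanly produces the convolution $W_{a,b}(n)$; tracking the constants $-24$, $6/b$, $6/a$ and $576$ accurately is what produces the clean pre-factors $\frac{1}{24}$ and $\frac{1}{576}$ in the final formula.
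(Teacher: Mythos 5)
Your proposal is correct and follows exactly the paper's route: it takes the weight--$4$ modular form $\frac{6}{b}\mathcal{D}E_2(az)+\frac{6}{a}\mathcal{D}E_2(bz)-E_2(az)E_2(bz)\in M_4(ab)$ from the second part of \thmref{main}, writes it in the basis $\{g_i\}$, and compares $n$-th Fourier coefficients, with the constants $24$, $144n/a$, $144n/b$ and $576$ tracked correctly. Nothing is missing; this is precisely the argument the paper indicates.
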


\bigskip

\noindent {\bf Proof of \thmref{main}}. Let $N = l.c.m(Mb, a)$ and let $A = \begin{pmatrix}\alpha & \beta \\
\gamma & \delta\end{pmatrix} \in \Gamma_0(N)$. Then we have $A' = \begin{pmatrix} \alpha & \beta b\\ \gamma/b & \delta\\
\end{pmatrix} \in \Gamma_0(M)$ and $A'' = \begin{pmatrix} \alpha & \beta a\\ \gamma/a & \delta\\
\end{pmatrix} \in SL(2,{\mathbb Z})$. Since $f\in M_k(M,\chi)$, we have the following transformation:\\
\begin{equation}\label{fbz}
f(b Az) = f(A'(bz)) = \chi(\delta) (\gamma z+\delta)^k f(bz).
\end{equation}
Differentiating \eqref{fbz} with respect to $z$ and using the fact that $D = \frac{1}{2\pi i} \frac{d}{dz}$, we get 
\begin{equation}\label{dfbz}
b Df(b Az) = \chi(\delta) (\gamma z+\delta)^{k+2}  b Df(bz) + \chi(\delta) \frac{k\gamma}{2\pi i} (\gamma z+\delta)^{k+1} f(bz).
\end{equation}
The transformation formula for $E_2(az)$ is given by 
\begin{equation}\label{e2az}
E_2(a Az) = E_2(A''(az)) = (\gamma z+\delta)^2 E_2(az) + \frac{12}{2\pi i} \frac{\gamma}{a} (\gamma z+ \delta).
\end{equation}
Using equations (\ref{fbz}), (\ref{dfbz}) and (\ref{e2az}), it follows easily that 
\begin{equation}
b Df(b Az) - \frac{ka}{12} E_2(aAz) f(b Az) = \chi(\delta) (\gamma z+\delta)^{k+2} \big(b Df(bz) - \frac{ka}{12} E_2(az) f(bz)\big),
\end{equation}
from which it follows that the function $b Df(bz) - \frac{ka}{12} E_2(az) f(bz)$ is a modular form and belongs to 
$M_{k+2}(N, \chi)$. 

To get the second part of the theorem, we take $A\in \Gamma_0(N)$, $N=ab$. 
Differentiating  \eqref{e2az} with respect to $z$, we get 
\begin{equation}
a DE_2(aAz) = (\gamma z+\delta)^4 a DE_2(az) + \frac{2\gamma}{2\pi i} (\gamma z+\delta)^3 E_2(az) + \frac{12}{a} \big(\frac{\gamma}{2\pi i}\big)^2 (\gamma z+ \delta)^2,
\end{equation} 
and a similar identity replacing $a$ by $b$. Using these two identities, it is easy to see that 
\begin{equation}
\begin{split}
& \frac{6a}{b} DE_2(aAz) + \frac{6b}{a} DE_2(bAz) -E_2(aAz) E_2(bAz) \hskip 6cm \\
&\hskip 3cm = (\gamma z+\delta)^4 \left(\frac{6a}{b} DE_2(az) + \frac{6b}{a} DE_2(bz) -E_2(az) E_2(bz)\right),
\end{split}
\end{equation}
from which the modular property follows. This completes the proof. 
$~$\hfill \qed

\smallskip

\section{Convolution sums of another type}

As an application of the extended Ramanujan-Serre derivative map, one can evaluate another type of convolution sum, which is defined as follows. Let $e, r, s$ be natural numbers with $r, s$ being odd integers. For natural numbers $a, b$ with $\gcd(a,b)=1$, define a sum as follows. 
\begin{equation}\label{ers}
W_{a,b}^{e;r,s}(n) := \sum_{l.m\in {\mathbb N}\atop{al+bm=n}} l^e \sigma_r(l) \sigma_s(m).
\end{equation}

\bigskip

In this section, we use the  operator $\vartheta_{k+2;(a,1)}$ on the modular form defined by \eqref{theta} (with $f(z) =E_k(z)$) and the operator $\vartheta_{4;(a,1)}$ on the modular form defined by \eqref{e2} to 
evaluate the convolution sums $W_{a,b}^{e;r,s}(n)$ with $e=r=1, s=k-1$ and $e=r=s=1$ respectively.
We obtain these applications in the next two subsections. 

\bigskip

\subsection{Evaluation of the convolution sum $\sum_{al+bm=n}l\sigma(l)\sigma_{k-1}(m)$} ~ 
By \thmref{main}, the function ${\mathcal D}E_k(bz) - \frac{k}{12} a E_2(az) E_k(bz)$ is a modular form in $M_{k+2}(ab)$, where $a$ and $b$ are relatively prime positive integers. Now applying the operator $\vartheta_{k+2;(a,1)}$ on this function, we get (by noting that $D = {\mathcal D}$ as $b=1$),
$$
{\mathcal D}\big({\mathcal D}E_k(bz) - \frac{k}{12} a E_2(az) E_k(bz)\big) - \frac{(k+2)}{12}a E_2(az) \big({\mathcal D}E_k(bz) - 
\frac{k}{12} a E_2(az) E_k(bz)\big),
$$
and by \thmref{main} it belongs to $M_{k+4}(ab)$. To simplify the above function we use the fact that $E_2^2(az) = \frac{12}{a}  {\mathcal D}E_2(az) + E_4(az)$ and also the fact that $E_4(az) E_k(bz)\in M_{k+4}(ab)$ to conclude that the following function is a modular form. In fact, we have 
$$
{\mathcal D}^2E_k(bz) + \frac{ka}{12} (k+1) {\mathcal D}E_2(az) E_k(bz) - \frac{a(k+1)}{6} E_2(az) {\mathcal D}E_k(bz) \in M_{k+4}(ab).
$$
(Note: For an integer $r\ge 1$, we have ${\mathcal D}^r f(bz) = \sum_{n\ge 0} n^r a_f(n/b) q^n$.) Let $h_i(z)$, $1\le i\le \lambda_{k+4}(ab)$ be a basis for the space $M_{k+4}(ab)$. Therefore, there exists constants 
$\gamma_i$, $1\le i\le \lambda_{k+4}(ab)$ such that 
\begin{equation*}
 {\mathcal D}^2E_k(bz) + \frac{ka}{12} (k+1) {\mathcal D}E_2(az) E_k(bz) - \frac{a(k+1)}{6} E_2(az) {\mathcal D}E_k(bz) 
= \sum_{I=1}^{\lambda_{k+4}(ab)} \gamma_i h_i(z).
\end{equation*}

\noindent 
Let us denote the $n$-th Fourier coefficient of the $i$-th basis element $h_i(z)$  as $c_i(n)$. Then by comparing the $n$-th Fourier coefficients on both the sides of the above identity and using the fact that 
\begin{equation}
b \sum_{al+bm=n} m\sigma(l)\sigma_{k-1}(m) = n W_{a,b}^{1,k-1}(n) - a \sum_{al+bm=n} l\sigma(l) \sigma_{k-1}(m),
\end{equation} 
we obtain the following theorem, where $W_{a,b}^{1,k-1}(n)$ is given by \corref{1-k-1}.

\smallskip

\begin{thm}\label{3.4}
Let $k\ge 4$ be an even integer and $a,b$ be positive integers such that $\gcd(a,b)=1$. Then for $n\ge 2$, there exists constants $\gamma_i$, $1\le i\le \lambda_{k+4}(ab)$, such that 
\begin{equation}
\begin{split}
\sum_{l,m\in {\mathbb N}\atop{al+bm=n}} l\sigma(l) \sigma_{k-1}(m) & = \frac{6n^2 - a(k+1)n}{12 a^2(k+1)(k+2)} 
\sigma_{k-1}(n/b) + \frac{B_k}{2a(k+2)} n\sigma(n/a) \quad \\
&~~\quad + \frac{2n}{a(k+2)} W_{a,b}^{1,k-1}(n) + \frac{B_k}{4a^2k(k+1)(k+2)}\!\!\!\sum_{i=1}^{\lambda_{k+4}(ab)} \!\!\!\gamma_i c_i(n),
\end{split}
\end{equation}
where $B_k$ is the $k$-th Bernoulli number, $\lambda_{k+4}(ab)$ is the dimension of the space $M_{k+4}(ab)$. 
The convolution sum $W_{a,b}^{1,k-1}(n)$ is given by \corref{1-k-1} and $c_i(n)$ is the $n$-th Fourier coefficient of the $i$-th  basis element $h_i$ of $M_{k+4}(ab)$ as denoted above. 
\end{thm}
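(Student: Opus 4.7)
The plan is to construct a weight-$(k+4)$ modular form whose $n$-th Fourier coefficient exposes the target sum $T(n) := \sum_{l,m\in\mathbb{N},\, al+bm=n} l\sigma(l)\sigma_{k-1}(m)$, then solve for $T(n)$ by expanding in a basis of $M_{k+4}(ab)$. Since \thmref{main} (applied with $f = E_k$) already gives the weight-$(k+2)$ form $g(z) := {\mathcal D}E_k(bz) - \frac{ka}{12}E_2(az)E_k(bz) \in M_{k+2}(ab)$, I apply the extended derivative a second time: by \thmref{main} with parameters $(a,1)$, $\vartheta_{k+2;(a,1)}(g) \in M_{k+4}(ab)$. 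Expanding $\vartheta_{k+2;(a,1)}(g) = Dg - \frac{(k+2)a}{12}E_2(az)g$ by the Leibniz rule (noting $D = {\mathcal D}$ when $b=1$) produces four summands: ${\mathcal D}^2E_k(bz)$, a multiple of ${\mathcal D}E_2(az)\cdot E_k(bz)$, a multiple of $E_2(az){\mathcal D}E_k(bz)$, and an obstruction term proportional to $E_2^2(az)E_k(bz)$, which is not itself modular.

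To clear the obstruction I apply Ramanujan's identity in the form $E_2^2(az) = \frac{12}{a}{\mathcal D}E_2(az) + E_4(az)$ (the substitution $z \mapsto az$ of $E_2^2 = 12 DE_2 + E_4$). This splits the $E_2^2$ term into a modular summand $E_4(az)E_k(bz) \in M_{k+4}(ab)$, which I absorb into the basis expansion on the right, plus a correction that merges with the pre-existing ${\mathcal D}E_2(az)\cdot E_k(bz)$ term. After collecting coefficients, one arrives at the clean modular form
\[
h(z) := {\mathcal D}^2E_k(bz) + \frac{ka(k+1)}{12}{\mathcal D}E_2(az)\cdot E_k(bz) - \frac{(k+1)a}{6}E_2(az){\mathcal D}E_k(bz) \in M_{k+4}(ab),
\]
exactly as displayed in the passage preceding the theorem.

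With $h$ in hand, I choose a basis $\{h_i\}_{i=1}^{\lambda_{k+4}(ab)}$ of $M_{k+4}(ab)$ with $n$-th Fourier coefficients $c_i(n)$, write $h = \sum_i \gamma_i h_i$, and equate $n$-th Fourier coefficients. The Cauchy product ${\mathcal D}E_2(az)\cdot E_k(bz)$ produces the target $T(n)$ together with an $n\sigma(n/a)$ boundary term, while $E_2(az){\mathcal D}E_k(bz)$ produces the companion sum $S(n) := \sum_{al+bm=n} m\sigma(l)\sigma_{k-1}(m)$ together with $n\sigma_{k-1}(n/b)$. The elementary identity $aT(n) + bS(n) = nW_{a,b}^{1,k-1}(n)$, immediate from $al+bm=n$, eliminates $S(n)$; solving the resulting linear equation for $T(n)$ yields the claimed formula, into which one may optionally insert the value of $W_{a,b}^{1,k-1}(n)$ from \corref{1-k-1}. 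The only delicate step is the coefficient bookkeeping that combines the two contributions to ${\mathcal D}E_2(az)\cdot E_k(bz)$ (one from the Leibniz expansion, one from the Ramanujan substitution) into the tidy coefficient $\frac{ka(k+1)}{12}$; the rest of the argument — two applications of \thmref{main} and a one-variable linear solve — is mechanical.
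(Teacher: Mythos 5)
Your proposal is correct and follows the paper's own route essentially verbatim: apply $\vartheta_{k+2;(a,1)}$ to the weight-$(k+2)$ form ${\mathcal D}E_k(bz)-\frac{ka}{12}E_2(az)E_k(bz)$ from \thmref{main}, clear the $E_2^2(az)E_k(bz)$ term via $E_2^2(az)=\frac{12}{a}{\mathcal D}E_2(az)+E_4(az)$ (absorbing $E_4(az)E_k(bz)$ into $M_{k+4}(ab)$), expand the resulting form in the basis $\{h_i\}$, and eliminate the companion sum $\sum m\sigma(l)\sigma_{k-1}(m)$ with the identity $aT(n)+bS(n)=nW_{a,b}^{1,k-1}(n)$. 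Your coefficient bookkeeping ($-\frac{ka}{12}+\frac{(k+2)ka}{12}=\frac{ka(k+1)}{12}$ and $-\frac{ka}{12}-\frac{(k+2)a}{12}=-\frac{a(k+1)}{6}$) matches the displayed form in the paper, so the argument is complete and identical in substance.
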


\smallskip

\begin{rmk}
We provide some examples for the above convolution sums in \S 4.3 by taking particular values of $a, b, k$.
 \end{rmk}
 
 \bigskip
 
\subsection{Evaluation of the convolution sum $\sum_{al+bm=n}l\sigma(l)\sigma(m)$} ~We now make use of the weight 4 modular form given by \eqref{e2} and apply the operator $\vartheta_{4;(a,1)}$ on this function to get a formula for the convolution sum $\sum_{al+bm=n} l\sigma(l) \sigma(m)$. Here again, $D = {\mathcal D}$ as $b=1$. 
In fact, this implies that the function 
\begin{equation*}
{\mathcal D}\Big(\frac{6}{b} {\mathcal D}E_2(az) + \frac{6}{a} {\mathcal D}E_2(bz) -E_2(az) E_2(bz)\Big) -\frac{a}{3} E_2(az) \Big(\frac{6}{b} {\mathcal D}E_2(az) + \frac{6}{a} {\mathcal D}E_2(bz) -E_2(az) E_2(bz)\Big)
\end{equation*}
is a modular form in $M_6(ab)$. Simplifying, we see that the function
\begin{equation*}
\frac{6}{b}{\mathcal D}^2E_2(az) + \frac{6}{a}{\mathcal D}^2E_2(bz) - E_2(az)\big[3{\mathcal D}E_2(bz) + \frac{2a}{b}{\mathcal D}E_2(az)\big] + E_2(bz)\big[3{\mathcal D}E_2(az) + \frac{a}{3}E_4(az)\big]
\end{equation*}
belongs to $M_6(ab)$, where $a, b$ are positive integers with $\gcd(a,b)=1$.  Let $\{F_i(z): 1\le i\le \lambda_6(ab)\}$ be a basis for the space $M_6(ab)$ whose $n$-th Fourier coefficients are denoted by $A_i(n)$, $1\le i\le \lambda_6(ab)$. 
Expressing the above modular form in terms of the basis elements $F_i$, and comparing the $n$-th Fourier coefficients, we obtain the following theorem. 
\begin{thm}\label{3.5}
Let  $a$ and $b$ be positive integers such that $\gcd(a,b)=1$ and assume that at least one of them is greater than 1. 
Then for $n\ge 2$, there exists constants $\delta_i$, $1\le i\le \lambda_{6}(ab)$, such that 
\begin{equation}
\begin{split}
\sum_{al+bm = n} l\sigma(l)\sigma(m) & = \frac{1}{144a}\Big(\frac{6}{b}n^2 +3n - \frac{2an}{b}\Big) \sigma(n/a)  +
\frac{1}{144a}\Big(\frac{6}{a}n^2 -3n + \frac{a}{3}\Big) \sigma(n/b) - \frac{5}{216} \sigma_3(n/a) \\
& \quad +\frac{n}{2a} W_{a,b}(n) + \frac{5}{9} W_{b,a}^{1,3}(n) + \frac{a}{3b}\sum_{l+m=n/a}l\sigma(l)\sigma(m) + \frac{1}{6\times 576 a} \sum_{i=1}^{\lambda_6(ab)} \delta_i A_i(n),
\end{split}
\end{equation}
where $A_i(n)$ are the $n$-th Fourier coefficients of a basis of $M_6(ab)$ (as denoted above), $W_{b,a}^{1,3}(n)$ is given by \corref{1-k-1} and the sum $\sum_{l+m=n}l\sigma(l)\sigma(m)$ is given by \eqref{1-1} (see the remark below).
\end{thm}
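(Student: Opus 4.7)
The plan is to apply the extended derivative $\vartheta_{4;(a,1)}$ of Theorem~\ref{main} to the weight $4$ modular form
\[
F(z) := \frac{6}{b}\mathcal{D}E_2(az) + \frac{6}{a}\mathcal{D}E_2(bz) - E_2(az)E_2(bz) \in M_4(ab)
\]
supplied by the second part of that theorem. Because the second parameter of the operator is $1$, we have $\mathcal{D}=D$ on the outside, and Theorem~\ref{main} guarantees $\vartheta_{4;(a,1)}(F) \in M_6(\operatorname{lcm}(ab,a)) = M_6(ab)$. Expanding $\vartheta_{4;(a,1)}(F) = \mathcal{D}F - \tfrac{a}{3}E_2(az)F$ via the Leibniz rule is routine; the only mildly awkward step is the appearance of $\tfrac{a}{3}E_2^2(az)E_2(bz)$, which I would clear using Ramanujan's classical identity $E_2^2(az) = \tfrac{12}{a}\mathcal{D}E_2(az) + E_4(az)$. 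After simplification this produces exactly the six-term expression displayed in the excerpt immediately before the statement.

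Next, I would write $\vartheta_{4;(a,1)}(F) = \sum_i \delta_i F_i(z)$ for a basis $\{F_i\}$ of $M_6(ab)$ and compare $n$-th Fourier coefficients. Using $E_2(az) = 1 - 24\sum_n \sigma(n/a)q^n$, $\mathcal{D}^jE_2(az) = -24\sum_n n^j\sigma(n/a)q^n$ and $E_4(az) = 1 + 240\sum_n \sigma_3(n/a)q^n$, the six terms of the modular form contribute multiples of $\sigma(n/a)$, $\sigma(n/b)$, and $\sigma_3(n/a)$, together with four convolution sums coming from the products $E_2(az)\mathcal{D}E_2(bz)$, $E_2(bz)\mathcal{D}E_2(az)$, $E_2(az)\mathcal{D}E_2(az)$ and $E_2(bz)E_4(az)$.

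The crux of the argument is to convert every convolution that appears into the target sum $T(n) := \sum_{al+bm=n}l\sigma(l)\sigma(m)$. The contribution from $E_2(bz)\mathcal{D}E_2(az)$ is $\sum_{bl+am=n}m\sigma(l)\sigma(m)$, which is precisely $T(n)$ after the relabelling $l\leftrightarrow m$. The contribution from $E_2(az)\mathcal{D}E_2(bz)$ is $\sum_{al+bm=n}m\sigma(l)\sigma(m)$; multiplying by $b$ and using $bm = n-al$ rewrites it as $\tfrac{1}{b}(nW_{a,b}(n) - aT(n))$, with $W_{a,b}(n)$ supplied by Corollary~\ref{wab}. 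The contribution from $E_2(az)\mathcal{D}E_2(az)$ is $\sum_{l+m=n/a}m\sigma(l)\sigma(m)$, which is symmetric in $l,m$ and therefore equals the classical convolution appearing in the theorem. Finally, $E_2(bz)E_4(az)$ contributes (up to scalars and lower-order terms) exactly $W_{b,a}^{1,3}(n)$, given by Corollary~\ref{1-k-1}. Combining contributions, $T(n)$ accumulates total coefficient $1728a + 1728a = 3456a = 6\cdot 576\cdot a$ on the left-hand side, which matches the $\tfrac{1}{6\times 576\,a}$ factor in front of the error term in the statement; isolating $T(n)$ then yields the stated identity. I expect the main obstacle to be purely bookkeeping, namely keeping the coefficients of $\sigma(n/a), \sigma(n/b), \sigma_3(n/a)$ straight and correctly tracking the divisibility constraints under each index substitution; the modularity is fully taken care of by Theorem~\ref{main}.
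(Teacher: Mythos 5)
Your proposal follows exactly the paper's route: apply $\vartheta_{4;(a,1)}$ to the weight $4$ form from the second part of Theorem~\ref{main}, eliminate $E_2^2(az)$ via $E_2^2(az)=\tfrac{12}{a}\mathcal{D}E_2(az)+E_4(az)$ to land on the same six-term modular form in $M_6(ab)$, expand it in a basis, and compare $n$-th Fourier coefficients, converting the resulting convolutions into $T(n)$, $W_{a,b}(n)$, $W_{b,a}^{1,3}(n)$ and $\sum_{l+m=n/a}l\sigma(l)\sigma(m)$ exactly as the paper does (and your total coefficient $3456a=6\times 576\,a$ for $T(n)$ is correct). This is essentially the paper's own proof, so no further comment is needed.
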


\smallskip

\begin{rmk}
When $a=b=1$, the convolution sum $\sum_{l+m=n}l\sigma(l)\sigma(m)$ is given by the following formula:
\begin{equation}\label{1-1}
\sum_{l+m=n}l\sigma(l)\sigma(m) = \frac{5}{24}n\sigma_3(n) -\frac{1}{24}(6n^2-n) \sigma(n)
\end{equation}
The above is obtained by differentiating the identity  $E_2^2(z) = 12DE_2(z) +E_4(z)$ and comparing the $n$-th Fourier coefficients.  In \S 4.4, we illustrate \thmref{3.5} with some examples. 
\end{rmk}

\begin{rmk}
The convolution sums defined by \eqref{ers} can be evaluated for $e\ge 2$ by using our extended theta operator. 
However, the resulting expression involves triple convolution sums as well (when $e=2$). So, to deal with the case 
$e=2$, one needs to find similar triple convolution sums (and higher convolution sums when $e>2$). 
In our forthcoming work, we consider the problem of evaluating convolution sums involving three or more divisor functions. 
\end{rmk}

\section{Examples}

In this section we shall provide explicit examples for the evaluation of the convolution sums obtained in Corollaries 
\ref{1-k-1}, \ref{wab} and Theorems \ref{3.4}, \ref{3.5}. For this purpose, we use explicit bases for the spaces of modular forms for a few values of $k, a, b$. In fact, once we know the explicit basis for the space of modular forms of weight $k$, level $N$, then all the formulas for the convolution sums derived in sections 2 and 3 can be obtained explicitly. 
 
The basis elements used here are expressed in terms of Eisenstein series (with their duplications) and newforms, which generate the space of cusp forms. As mentioned earlier, $E_k(z)$ given by \eqref{ek} denotes the normalized Eisenstein series of weight $k$ for the full modular group. We denote by $\Delta_{k,N}(z)$, the unique (normalized) newform of weight $k$, level $N$ and $\Delta_{k,N;j}(z)$ denotes the $j$-th (normalized) newform of weight $k$, level $N$, when the space of newforms has dimension $>1$. These newforms are ordered as per the list provided in LMFDB \cite{lmfdb}. 
The $n$-th Fourier coefficient of the unique newform is denoted by $\tau_{k,N}(n)$ and the $n$-th Fourier coefficient of the $j$-th newform of weight $k$, level $N$ is denoted as $\tau_{k,N;j}(n)$. Explicit expressions for the newforms used in our bases are presented in the Appendix. 

For the examples provided in this paper, we use the following basis for the space of modular forms $M_k(N)$ for the values of $k, N$ listed in the table below. 

\begin{center}

{\bf Table 1}. Basis for the space $M_k(N)$. 

\smallskip
\begin{tabular}{|c|c|c|c|}
\hline 
$k$ & $N$ & dim & Basis \\
\hline 
4&30&22& $\{E_4(t_1z)\!:\!t_1\vert 30, \Delta_{4,5}(t_2z)\!:\!t_2\vert 6, \Delta_{4,6}(t_3z)\!:\!t_3\vert 5,\Delta_{4,10}(t_4z) : t_4\vert 3,$\\
&&& $\Delta_{4,15;1}(t_5z), \Delta_{4,15;2}(t_6z)\!:\!t_5, t_6\vert 2, \Delta_{4,30;1}(z), \Delta_{4,30;2}(z)\}$\\
\hline 
6 & 4& 4& $\{E_6(z), E_6(2z), E_6(4z), \Delta_{6,4}(z)\}$\\
\hline 
 6 & 6& 7& $\{E_6(tz), t\vert 6, \Delta_{6,3}(z), \Delta_{6,3}(2z), \Delta_{6,6}(z)\}$\\
\hline
8 & 4 & 5&  $\{E_8(z), E_8(2z), E_8(4z), \Delta_{8,2}(z), \Delta_{8,2}(2z)\}$\\
\hline
8 & 6 & 9&  $\{E_8(tz), t\vert 6, \Delta_{8,2}(z), \Delta_{8,2}(3z), \Delta_{8,3}(z), \Delta_{8,3}(2z), \Delta_{8,6}(z)\}$\\
\hline
10 & 4 & 6&  $\{E_{10}(z), E_{10}(2z), E_{10}(4z), \Delta_{10,2}(z), \Delta_{10,2}(2z), \Delta_{10,4}(z)\}$\\
\hline
10 & 6 & 11 &$\{E_{10}(tz), t\vert 6; \Delta_{10,2}(z), \Delta_{10,2}(2z), \Delta_{10,3;1}(z),$\\
 &  &  & $ \Delta_{10,3;1}(2z), \Delta_{10,3;2}(z), \Delta_{10,3;2}(2z), \Delta_{10,6}(z)\}$\\
\hline 
12&4&7 & $\{E_{12}(z), E_{12}(2z), E_{12}(4z), \Delta(z), \Delta(2z), \Delta(4z), \Delta_{12,4}(z)\}$\\
\hline 
12& 6 & 13 &$\{E_{12}(t_1z), t_1\vert 6; \Delta(t_2z), t_2\vert 6; \Delta_{12,3}(z), \Delta_{12,3}(2z),$ \\
 &  &  & $ \Delta_{12,6;1}(z), \Delta_{12,6;2}(z), \Delta_{12,6;3}(z)\}$ \\
\hline 
\end{tabular}
\end{center}

\smallskip

Note that when $N=2$ or $3$, we get a basis for $M_k(N)$ as a subset of the above basis for $M_k(4)$ or $M_k(6)$. 
So, when the level is 2 or 3, we make use of this fact. We also use a similar fact when we consider the space $M_4(d)$, 
where $d\vert 30$ for the convolution sums $W_{a,b}(n)$. 

\subsection{Examples for the convolution sums $W_{a,b}^{1,k-1}(n)$}

The following table gives some known results for the evaluation of this type of convolution sum. Here we list when 
$k=4, 6, 8, 10, 12$. 

\begin{center}
{\bf Table 2}. Known convolution sums $W_{a,b}^{1,k-1}(n)$. 
\smallskip

\begin{tabular}{|c|c|c|}
\hline
$k$ & $(a,b)$ & References\\
\hline
&$(1,1)$ & \cite{{ramanujan}, {lahiri1},{lahiri2}, {cw2}, {royer}}\\
\cline{2-3}
&$(1,2), (2,1)$ &\cite{{huard}, {cw2},{melfi1},{melfi2}, {royer}} \\
\cline{2-3}
4&$(1,3), (3,1)$ &\cite{yao}\\
\cline{2-3}
&$(1,4), (4,1)$ & \cite{cw2}\\
\cline{2-3}
&$(1,6), (6,1)$, $(2,3)$, $(3,2)$ &\cite{kokluce}\\
\hline
&$(1,1)$ & \cite{{ramanujan}, {lahiri1},{lahiri2}, {royer}} \\
\cline{2-3}
6&$(1,2), (2,1), (1,4), (4,1)$ &\cite{{cw2}, {melfi1},{melfi2}, {royer}}   \\
\hline 
8 & $(1,1), (1,2),(2,1), (1,4), (4,1)$ & \cite{{ramanujan}, {lahiri1}, {lahiri2},{cw2}}\\
\hline 
10 & $(1,1), (1,2), (2,1), (1,4),(4,1)$ & \cite{{ramanujan}, {lahiri1}, {lahiri2},{cw2}}\\
\hline 
12 & $(1,1)$ & \cite{{ramanujan},{lahiri1}, {lahiri2}}\\
\hline 
\end{tabular}
\end{center}

\smallskip

To illustrate our method, we prove the identities obtained in \cite[Theorems 4.2, 5.2, 6.2, 7.2]{cw2}, when $k=4,6,8,10$ and 
$(a,b) = (1,2), (2,1), (1,4), (4,1)$ using \corref{1-k-1}. In the following we express ${\mathcal D}E_k(bz) - (k/12) aE_2(az)E_k(bz)$ as a linear combination of basis elements of $M_6(ab)$. Comparing the $n$-th Fourier coefficients, the results obtained in 
Theorems 4.2, 5.2, 6.2, 7.2 of \cite{cw2} follow directly. In some cases, the cusp forms appearing in the formulas of \cite{cw2} differ from our cusp forms (which are mentioned after these expressions). 
  
\begin{align}
{\mathcal D}E_4(2z) - \frac{1}{3} E_2(z) E_4(2z) & = - \frac{1}{63} E_6(z) - \frac{20}{63} E_6(2z),  \label{21}\\
{\mathcal D}E_4(z) - \frac{2}{3} E_2(2z) E_4(z) & = -\frac{10}{63} E_6(z) - \frac{32}{63} E_6(2z),\label{22}\\
{\mathcal D}E_4(4z) - \frac{1}{3} E_2(z) E_4(4z) & = -\frac{1}{1008} E_6(z) - \frac{5}{336} E_6(2z) - \frac{20}{63} E_6(4z) + \frac{15}{2}\Delta_{6,4}(z),\label{23}\\
{\mathcal D}E_4(z) - \frac{4}{3} E_2(4z) E_4(z) & = -\frac{5}{63} E_6(z) - \frac{5}{21} E_6(2z) - \frac{64}{63} E_6(4z) -120 \Delta_{6,4}(z),\label{24}\\
{\mathcal D}E_6(2z) - \frac{1}{2} E_2(z) E_6(2z) & = -\frac{1}{170} E_8(z) - \frac{42}{85} E_8(2z) + \frac{252}{17} \Delta_{8,2}(z),\label{25}\\
{\mathcal D}E_6(z) -  E_2(2z) E_6(z) & =  -\frac{21}{85} E_8(z) - \frac{64}{85} E_8(2z) + \frac{2017}{17} \Delta_{8,2}(z),\label{26}\\
{\mathcal D}E_6(4z) - \frac{1}{2} E_2(z) E_6(4z) & =  -\frac{1}{10880}E_8(z)-\frac{63}{10880}E_8(2z)-\frac{42}{85}E_8(4z) +\frac{819}{68} \Delta_{8,2}(z)  \notag \\
& \quad + \frac{2394}{17}\Delta_{8,2}(2z),\label{27}\\
{\mathcal D}E_6(z) - 2 E_2(4z) E_6(z) & =  -\frac{21}{170}E_8(z)-\frac{63}{170}E_8(2z)-\frac{128}{85}E_8(4z) +\frac{9576}{17} \Delta_{8,2}(z)  \notag \\
&\quad + \frac{209664}{17}\Delta_{8,2}(2z),\label{28}\\
{\mathcal D}E_8(2z) - \frac{2}{3} E_2(z) E_8(2z) & =  -\frac{2}{1023} E_{10}(z) - \frac{680}{1023} E_{10}(2z) + \frac{480}{31}\Delta_{10,2}(z),\label{29}\\
{\mathcal D}E_8(z) - \frac{4}{3} E_2(2z) E_8(z) & = -\frac{340}{1023} E_{10}(z) - \frac{1024}{1023} E_{10}(2z) - \frac{7680}{31}\Delta_{10,2}(z),\label{30}\\
{\mathcal D}E_8(4z) - \frac{2}{3} E_2(z) E_8(4z) & =  -\frac{1}{130944} E_{10}(z) -\frac{85}{43648} E_{10}(2z) - \frac{680}{1023} E_{10}(4z) + \frac{585}{62} \Delta_{10,2}(z) \notag \\
& \quad -\frac{3240}{31} \Delta_{10,2}(2z) + \frac{105}{16}\Delta_{10,4}(z),\label{31}\\
{\mathcal D}E_8(z) - \frac{8}{3} E_2(4z) E_8(z) & =  -\frac{170}{1023} E_{10}(z) -\frac{170}{341} E_{10}(2z) - \frac{2048}{1023} E_{10}(4z) + \frac{25920}{31} \Delta_{10,2}(z) \notag \\
 &\quad -\frac{2396160}{31} \Delta_{10,2}(2z) -1680 \Delta_{10,4}(z),\label{32}\\
{\mathcal D}E_{10}(2z) - \frac{5}{6} E_2(z) E_{10}(2z) & =  -\frac{1}{1638} E_{12}(z) - \frac{682}{819} E_{12}(2z) + \frac{13860}{691} \Delta(z) + \frac{297792}{691} \Delta(2z), \label{33}\\
{\mathcal D}E_{10}(z) - \frac{5}{3} E_2(2z) E_{10}(z) & =  -\frac{341}{819} E_{12}(z) - \frac{1024}{819} E_{12}(2z) + \frac{148896}{691} \Delta(z)  + \frac{28385280}{691} \Delta(2z),\label{34}\\
%
{\mathcal D}E_{10}(4z) - \frac{5}{6} E_2(z) E_{10}(4z) & = -\frac{1}{1677312}E_{12}(z)- \frac{341}{559104}E_{12}(2z)-\frac{682}{819}E_{12}(4z) + \frac{599445}{44224}\Delta(z) \notag \\
& \quad +\frac{2130975}{5528}\Delta(2z)+\frac{19999584}{691}\Delta(4z)+\frac{825}{128}\Delta_{12,4}(z),\label{35}\\
{\mathcal D}E_{10}(z) - \frac{10}{3} E_2(4z) E_{10}(z) & = -\frac{341}{1638}E_{12}(z)- \frac{341}{546}E_{12}(2z)-\frac{2048}{819}E_{12}(4z) + \frac{4999896}{691}\Delta(z)  \notag \\
& \quad +\frac{272764800}{691}\Delta(2z)+\frac{39285227520}{691}\Delta(4z)-6600 \Delta_{12,4}(z).\label{36}
\end{align}

\smallskip

We make some remarks and the theorem numbers mentioned in these remarks are from the work of Cheng and Williams \cite{cw2}. 

\begin{rmk}
Our identities for $W_{1,2}^{1,3}(n)$, $W_{2,1}^{1,3}(n)$, $W_{1,4}^{1,3}(n)$, $W_{4,1}^{1,3}(n)$ (resulting from \eqref{21}-\eqref{24}) match with the formulas obtained in Theorem 4.2. Note that the Fourier coefficient $a(n)$ appearing in 
Theorem 4.2 is nothing but $\tau_{6,4}(n)$. Similarly, the formulas for $W_{1,2}^{1,5}(n)$, $W_{2,1}^{1,5}(n)$, $W_{1,4}^{1,5}(n)$, $W_{4,1}^{1,5}(n)$ (following from \eqref{25}-\eqref{28}) are identical with the formulas in Theorem 5.2 (i), (iii), (iv), (vi). In this case the Fourier coefficient $b(n)$ in their formulas is the $n$-th Fourier coefficient of the newform $\Delta_{8,2}(z)$. 
Formulas for $W_{1,2}^{1,9}(n)$, $W_{2,1}^{1,9}(n)$ obtained from \eqref{33}, \eqref{34} are the same formulas given as in 
Theorem 7.2 (i), (v). 
\end{rmk}

\begin{rmk}
From the identity \eqref{29} we derive the following. For a natural number $n$,   
\begin{equation}
W_{1,2}^{1,7}(n) = \frac{1}{14880} \sigma_9(n) + \frac{17}{744} \sigma_9(n/2)  + \frac{(2-3n)}{48} \sigma_7(n/2) - \frac{1}{480} \sigma(n) + \frac{1}{496}\tau_{10,2}(n). 
\end{equation}
Comparing the above identity with \cite[Theorem 6.2 (i)]{cw2}, we get 
\begin{equation*}
 c(n) + 32 d(n) = \tau_{10,2}(n), 
\end{equation*}
the $n$-th Fourier coefficient of the normalized newform $\Delta_{10,2}(z)$, where $c(n)$ and $d(n)$ are Fourier 
coefficients of certain eta-products \cite[Eq.(6.1), Eq.(6.2)]{cw2}. Therefore, we have 
\begin{equation}
\Delta_{10,2}(z) = \eta^{16}(z) \eta^4(2z) + 32 \eta^8(z) \eta^4(2z) \eta^8(4z),
\end{equation}
where $\eta(z) = q^{1/24}\prod_{n\ge 1}(1-q^n)$, is the Dedekind eta-function, $q= e^{2\pi iz}$. 
We obtain a similar conclusion from our convolution sum $W_{2,1}^{1,7}(n)$ obtained from \eqref{30}, which corresponds to 
Theorem 6.2 (ii). However, the formulas arising from \eqref{31} and  \eqref{32} for the sums $W_{1,4}^{1,7}(n)$ and 
$W_{4,1}^{1,7}(n)$ do not match with the formulas in Theorem 6.2 (v) and (viii). The reason is that in their work, Cheng and Williams use different cusp forms $C(z)$, $D(z)$ and $E(z)$ (refer \cite[Eqs. (6.1), (6.2), (6.3)]{cw2}. Comparing our formulas and their formulas, we obtain the following relations for all natural numbers $n$:
\begin{align*}
\frac{529}{253952} c(n) + \frac{625}{15872} d(n) + \frac{109}{992} e(n) & = 
\frac{39}{31744} \tau_{10,2}(n) - \frac{27}{1984} \tau_{10,2}(n/2) + \frac{7}{8192} \tau_{10,4}(n),\\
-\frac{109}{3968} c(n) - \frac{625}{248} d(n) - \frac{1058}{31} e(n) & = 
\frac{27}{992} \tau_{10,2}(n) - \frac{78}{31} \tau_{10,2}(n/2) - \frac{7}{128} \tau_{10,4}(n),
\end{align*}
where $c(n), d(n), e(n)$ are the $n$-th Fourier coefficients of $C(z), D(z)$ and $E(z)$ respectively. 
\end{rmk}

\smallskip

\begin{rmk}
We now consider the sums $W_{1,4}^{1,9}(n)$, $W_{4,1}^{1,9}(n)$. In the notation of \cite{cw2}, these are denoted respectively by $U_{9,1}(n)$ and $U_{1,9}(n)$. Using the identities \eqref{35} and \eqref{36} and comparing the $n$-th Fourier coefficients, we obtain for all integers $n\ge 1$, 

\begin{align*}
W_{1,4}^{1,9}(n)  &= \frac{1}{93401088} \sigma_{11}(n) + \frac{31}{2830336} \sigma_{11}(n/2) + \frac{31}{2073} \sigma_{11}(n/4)+ \frac{(5-6n)}{120} \sigma_{9}(n/4) + \frac{1}{264} \sigma(n)\\
&\qquad - \frac{3633}{1415168}\tau(n) - \frac{12915}{176896}\tau(n/2) - \frac{18939}{3455}\tau(n/4) - \frac{5}{4096} \tau_{12,4}(n),\\
W_{4,1}^{1,9}(n) &=  \frac{31}{33168} \sigma_{11}(n) + \frac{31}{11056} \sigma_{11}(n/2) + \frac{256}{22803} \sigma_{11}(n/4)+ \frac{(10-3n)}{240} \sigma_{9}(n)+ \frac{1}{264} \sigma(n/4)  \\
& \qquad - \frac{18939}{55280}\tau(n) - \frac{12915}{691}\tau(n/2) - \frac{1860096}{691}\tau(n/4)+ \frac{5}{16} \tau_{12,4}(n).\\
\end{align*}
In \cite{cw2}, Cheng and Williams used the Fourier coefficient $f(n)$ of the eta quotient $F(z)=\frac{\eta^{32}(4z)}{\eta^8(z)}$ 
(apart from the Eisenstein series $E_{12}(z)$ and $\Delta(z)$)  to express the convolution sums $U_{9,1}(n)$ and 
$U_{1,9}(n)$. (It is to be noted that the modular form $F(z)$ is  not a cusp form.)  However, we use the Eisenstein series $E_{12}(z)$ and the two cusp forms  $\Delta(z)$ and $\Delta_{12, 4}(z)$ to express our formulas. Therefore, our formulas presented above do not match with the formulas obtained in \cite{cw2}. Noting that 
\begin{align*}
F(z) &=\frac{1}{4293918720}E_{12}(z) -\frac{1} {4293918720}E_{12}(2z)-\frac{347}{67928064}\Delta(z) \\
& \qquad -\frac{475}{2830336}\Delta(2z)-\frac{5}{48}\Delta(4z)+\frac{1}{196608}\Delta_{12, 4}(z),
\end{align*}
and comparing the $n$-th coefficients one can get an expression for $f(n)$ in terms of $\sigma_{11}(n)$, $\tau(n)$ and $\tau_{12,4}(n)$. Substituting this expression (for $f(n)$) in 
Theorem 7.2, (vi) and (x),  we obtain $U_{9,1}(n)= W_{1,4}^{1,9}(n) $ and $U_{1,9}(n) = W_{4,1}^{1,9}(n) $.
\end{rmk}

\smallskip

In the following, we give some new formulas for $W_{a,b}^{1,k-1}(n)$. Specifically, we give below the formulas corresponding to the cases $(a,b) = (1,3), (3,1), (1,6), (6,1), (2,3), (3,2)$,  when $k= 4,6,8,10$. To get these formulas we make use of the basis for $M_{k+2}(6)$ given in Table 1. Using our method, the following formulas hold for all $n\in {\mathbb N}$.   


\begin{eqnarray*}
 W_{1,3}^{1,3}(n) &=& \frac{1}{1040} \sigma_5(n) + \frac{9}{104} \sigma_5(n/3)  + \frac{(1-3n)}{24}\sigma_{3}(n/3) - \frac{1}{240}\sigma(n) + \frac{1}{312} \tau_{6,3}(n)\label{W1313},\nonumber \\
W_{3,1}^{1,3}(n) &=& \frac{1}{104} \sigma_5(n) + \frac{81}{1040} \sigma_5(n/3)  + \frac{(1-n)}{24} \sigma_{3}(n)  - \frac{1}{240}\sigma(n/3) - \frac{1}{104} \tau_{6,3}(n),\nonumber \\
W_{1,6}^{1,3}(n) &=& \frac{1}{21840} \sigma_5(n) + \frac{1}{1092} \sigma_5(n/2) + \frac{3}{728} \sigma_5(n/3) + \frac{15}{182} \sigma_5(n/6) + \frac{(1-3n)}{24} \sigma_{3}(n/6)  \nonumber \\
&&- \frac{1}{240}\sigma(n) + \frac{ 1}{468} \tau_{6,3}(n) + \frac{7}{468} \tau_{6,3}(n/2) + \frac{1}{504} \tau_{6,6}(n) ,\nonumber 
\end{eqnarray*}

\begin{eqnarray*}
W_{6,1}^{1,3}(n) &=&\frac{5}{2184} \sigma_5(n) + \frac{2}{273} \sigma_5(n/2) + \frac{27}{1456} \sigma_5(n/3) + \frac{27}{455} \sigma_5(n/6) + \frac{(2-n)}{48} \sigma_{3}(n) \nonumber \\
&&- \frac{1}{240}\sigma(n/6)  - \frac{7}{624} \tau_{6,3}(n) - \frac{4}{39} \tau_{6,3}(n/2) - \frac{1}{84} \tau_{6,6}(n)  ,\nonumber \\
W_{2,3}^{1,3}(n) &=&\frac{1}{4368} \sigma_5(n) + \frac{1}{1365} \sigma_5(n/2) + \frac{15}{728} \sigma_5(n/3) + \frac{6}{91} \sigma_5(n/6) + \frac{(2-3n)}{48} \sigma_{3}(n/3) \nonumber \\
&&- \frac{1}{240}\sigma(n/2) + \frac{ 7}{1872} \tau_{6,3}(n) +  \frac{4}{117} \tau_{6,3}(n/2) - \frac{1}{252} \tau_{6,6}(n),\nonumber \\
W_{3,2}^{1,3}(n) &=&\frac{1}{2184} \sigma_5(n) + \frac{5}{546} \sigma_5(n/2) + \frac{27}{7280} \sigma_5(n/3) + \frac{27}{364} \sigma_5(n/6) + \frac{(1-n)}{24} \sigma_{3}(n/2) \nonumber \\
&&- \frac{1}{240}\sigma(n/3) - \frac{ 1}{156} \tau_{6,3}(n) -  \frac{7}{156} \tau_{6,3}(n/2) + \frac{1}{168} \tau_{6,6}(n),
\nonumber\\
\!\!\!\! W_{1,3}^{1,5}(n) \!\!\!\!&=&\!\!\!\! -\frac{1}{19680} \sigma_7(n)  - \frac{273}{6560} \sigma_7(n/3) + \frac{(1-2n)}{24}\sigma_5(n/3) + \frac{1}{504}\sigma(n)  + \frac{7}{3280} \tau_{8,3}(n),\\
\!\!\!\! W_{3,1}^{1,5}(n) \!\!\!\!&=&\!\!\!\!-\frac{91}{19680} \sigma_7(n)  - \frac{243}{6560} \sigma_7(n/3)  + \frac{(3-2n)}{72} \sigma_5(n) + \frac{1}{504} \sigma(n/3) + \frac{63}{3280} \tau_{8,3}(n),\\
W_{1,6}^{1,5}(n) \!\!\!\!&=&\!\!\!\! \frac{1}{1756440}\sigma_7(n)+\frac{1}{20910}\sigma_7(n/2)+\frac{13}{27880}\sigma_7(n/3)+\frac{273}{6970}\sigma_7(n/6) + \frac{(1-2n)}{24}\sigma_5(n/6)\\
&&+ \frac{1}{504}\sigma(n)-\frac{1}{1632}\tau_{8,2}(n)-\frac{1}{544}\tau_{8,2}(n/3)-\frac{1}{1476}\tau_{8,3}(n)-\frac{1}{738}\tau_{8,3}(n/2)-\frac{1}{1440}\tau_{8,6}(n),\\
W_{6,1}^{1,5}(n) \!\!\!\!&=&\!\!\!\! \frac{91}{83640}\sigma_7(n)+\frac{104}{31365}\sigma_7(n/2)+\frac{243}{27880}\sigma_7(n/3)+\frac{648}{24395}\sigma_7(n/6) + \frac{(3-n)}{72}\sigma_5(n) \\
&&+ \frac{1}{504}\sigma(n/6)-\frac{1}{1224}\tau_{8,2}(n)-\frac{243}{136}\tau_{8,2}(n/3)-\frac{1}{328}\tau_{8,3}(n)-\frac{16}{41}\tau_{8,3}(n/2)-\frac{1}{40}\tau_{8,6}(n),\\
W_{2,3}^{1,5}(n) \!\!\!\!&=&\!\!\!\! \frac{1}{83640}\sigma_7(n)+\frac{8}{219555}\sigma_7(n/2)+\frac{273}{27880}\sigma_7(n/3)+\frac{104}{3485}\sigma_7(n/6) + \frac{(1-n)}{24}\sigma_5(n/3)\\
&&+ \frac{1}{504}\sigma(n/2)-\frac{1}{408}\tau_{8,2}(n)-\frac{1}{136}\tau_{8,2}(n/3)-\frac{1}{2952}\tau_{8,3}(n)-\frac{16}{369}\tau_{8,3}(n/2)+\frac{1}{360}\tau_{8,6}(n),\\
W_{3,2}^{1,5}(n) \!\!\!\!&=&\!\!\!\! \frac{13}{250920}\sigma_7(n)+\frac{91}{20910}\sigma_7(n/2)+\frac{81}{195160}\sigma_7(n/3)+\frac{243}{6970}\sigma_7(n/6) + \frac{(3-2n)}{72}\sigma_5(n/2) \\
&&+ \frac{1}{504}\sigma(n/3)-\frac{1}{4896}\tau_{8,2}(n)-\frac{243}{544}\tau_{8,2}(n/3)-\frac{1}{164}\tau_{8,3}(n)-\frac{1}{82}\tau_{8,3}(n/2)+\frac{1}{160}\tau_{8,6}(n),\\
W_{1,3}^{1,7}(n)  &=& \frac{1}{322080} \sigma_9(n) \!+\! \frac{123}{5368} \sigma_9(n/3)  + \frac{(2-3n)}{48} \sigma_7(n/3) - \frac{1}{480}\sigma(n) + \frac{1}{1647}\tau_{10,3;1}(n) \\
&& + \frac{7}{4752}\tau_{10,3;2}(n),\\
W_{3,1}^{1,7}(n) &=& \frac{41}{16104} \sigma_9(n) + \frac{2187}{107360} \sigma_9(n/3)  + \frac{(2-n)}{48} \sigma_7(n) - \frac{1}{480}\sigma(n/3) + \frac{1}{61}\tau_{10,3;1}(n)\\
&& - \frac{7}{176}\tau_{10,3;2}(n) ,\\
\end{eqnarray*}
\begin{eqnarray*}
W_{1,6}^{1,7}(n) &=& \frac{1}{109829280} \sigma_9(n) + \frac{17}{5491464} \sigma_9(n/2) + \frac{123}{1830488} \sigma_9(n/3) - \frac{10455}{457622} \sigma_9(n/6) \\
&&+ \frac{(2-3n)}{48} \sigma_7(n/6) - \frac{1}{480}\sigma(n) + \frac{1}{1488} \tau_{10,2}(n) + \frac{21}{248} \tau_{10,2}(n/3) + \frac{7}{13176} \tau_{10,3;1}(n)  \\
&&+ \frac{23}{1647} \tau_{10,3;1}(n/2) + \frac{7}{13068} \tau_{10,3;2}(n) - \frac{91}{26136} \tau_{10,3;2}(n/2) + \frac{1}{2904} \tau_{10,6}(n) ,\\
W_{6,1}^{1,7}(n) &=& \frac{3485}{5491464} \sigma_9(n) + \frac{1312}{686433}\sigma_9(n/2) + \frac{37179}{7321952} \sigma_9(n/3) + \frac{17496}{1144055} \sigma_9(n/6) \\
&&+ \frac{(4-n)}{96} \sigma_7(n) - \frac{1}{480}\sigma(n/6) -\frac{7}{93} \tau_{10,2}(n) - \frac{2187}{62} \tau_{10,2}(n/3) + \frac{23}{244} \tau_{10,3;1}(n) \\
&& + \frac{224}{61} \tau_{10,3;1}(n/2) +\frac{91}{3872} \tau_{10,3;2}(n) - \frac{448}{121} \tau_{10,3;2}(n/2) - \frac{9}{121} \tau_{10,6}(n),\\
W_{2,3}^{1,7}(n) &=& \frac{17}{21965856} \sigma_9(n) + \frac{8}{3432165} \sigma_9(n/2) + \frac{10455}{1830488} \sigma_9(n/3) + \frac{3936}{228811} \sigma_9(n/6)\\
&&+ \frac{(4-3n)}{96} \sigma_7(n/3) - \frac{1}{480}\sigma(n/2) -\frac{1}{186} \tau_{10,2}(n) - \frac{21}{31} \tau_{10,2}(n/3) + \frac{23}{6588} \tau_{10,3;1}(n)  \\
&&+ \frac{224}{1647} \tau_{10,3;1}(n/2) - \frac{91}{104544} \tau_{10,3;2}(n) + \frac{448}{3267} \tau_{10,3;2}(n/2) + \frac{1}{363} \tau_{10,6}(n),\\
W_{3,2}^{1,7}(n) &=& \frac{41}{5491464} \sigma_9(n) + \frac{3485}{1372866} \sigma_9(n/2) + \frac{2187}{36609760} \sigma_9(n/3) + \frac{37179}{1830488} \sigma_9(n/6)\\
&&+ \frac{(2-n)}{48} \sigma_7(n/2) - \frac{1}{480}\sigma(n/3) +\frac{7}{744} \tau_{10,2}(n) + \frac{2187}{496} \tau_{10,2}(n/3) + \frac{7}{488} \tau_{10,3;1}(n) \\
&& + \frac{23}{61} \tau_{10,3;1}(n/2) - \frac{7}{484} \tau_{10,3;2}(n) + \frac{91}{968} \tau_{10,3;2}(n/2) - \frac{9}{968} \tau_{10,6}(n),\\
W_{1,3}^{1,9}(n)  &=&\!\!\! \frac{1}{4438984} \sigma_{11}(n) + \frac{6039}{403544} \sigma_{11}(n/3) +\frac{(5-6n)}{120} \sigma_{9}(n/3) + \frac{1}{264} \sigma(n) - \frac{98}{35241}\tau(n) \\
&&+ \frac{29034}{58735}\tau(n/3) - \frac{5}{4964} \tau_{12,3}(n),\\
W_{3,1}^{1,9}(n) &=&\!\!\! \frac{671}{403544} \sigma_{11}(n) + \frac{59049}{4438984} \sigma_{11}(n/3) + \frac{(5-2n)}{120} \sigma_{9}(n) + \frac{1}{264} \sigma(n/3) + \frac{3226}{58735}\tau(n)  \\
&&- \frac{1928934}{11747}\tau(n/3) - \frac{405}{4964} \tau_{12,3}(n),\\
W_{1,6}^{1,9}(n) &=&\!\!\! \frac{1}{6059213160} \sigma_{11}(n) + \frac{31}{137709390} \sigma_{11}(n/2) + \frac{2013}{183612520} \sigma_{11}(n/3) + \frac{686433}{45903130} \sigma_{11}(n/6) \\
&& + \frac{(5-6n)}{120} \sigma_{9}(n/6) + \frac{1}{264} \sigma(n)- \frac{1715}{1691568}\tau(n) - \frac{2303}{105723}\tau(n/2) + \frac{11291}{187952}\tau(n/3)\\
&& + \frac{227433}{58735}\tau(n/6)-\frac{395}{431868} \tau_{12,3}(n) + \frac{9835}{215934} \tau_{12,3}(n/2) - \frac{7}{11745} \tau_{12,6;1}(n) \\
&& - \frac{641}{1235520} \tau_{12,6;2}(n) - \frac{1}{1344} \tau_{12,6;3}(n),\\
\end{eqnarray*}
\begin{eqnarray*}
W_{6,1}^{1,9}(n) &=&\!\!\! \frac{228811}{550837560} \sigma_{11}(n) + \frac{85888}{68854695} \sigma_{11}(n/2) + \frac{610173}{183612520} \sigma_{11}(n/3) + \frac{2519424}{252467215} \sigma_{11}(n/6)\\ 
&&+ \frac{(5-n)}{120} \sigma_{9}(n) + \frac{1}{264} \sigma(n/6) +\frac{75811}{704820}\tau(n) + \frac{722624}{35241}\tau(n/2) - \frac{15109983}{46988}\tau(n/3)\\
&& - \frac{720135360}{11747}\tau(n/6) +\frac{265545}{287912} \tau_{12,3}(n) - \frac{2730240}{35989} \tau_{12,3}(n/2) - \frac{112}{145} \tau_{12,6;1}(n)  \\
&&+ \frac{35}{52} \tau_{12,6;2}(n) - \frac{27}{28} \tau_{12,6;3}(n) ,\\
W_{2,3}^{1,9}(n) &=&\!\!\! \frac{31}{550837560} \sigma_{11}(n)+ \frac{128}{757401645} \sigma_{11}(n/2) + \frac{686433}{183612520} \sigma_{11}(n/3) + \frac{257664}{22951565} \sigma_{11}(n/6)\\ 
&&+ \frac{(5-3n)}{120} \sigma_{9}(n/3) + \frac{1}{264} \sigma(n/2) -\frac{2303}{422892}\tau(n) - \frac{109760}{105723}\tau(n/2) + \frac{227433}{234940}\tau(n/3)\\
&& + \frac{2167872}{11747}\tau(n/6) +\frac{9835}{863736} \tau_{12,3}(n) - \frac{101120}{107967} \tau_{12,3}(n/2) - \frac{112}{11745} \tau_{12,6;1}(n) \\
&& - \frac{35}{4212} \tau_{12,6;2}(n) +\frac{1}{84} \tau_{12,6;3}(n) ,\\
W_{3,2}^{1,9}(n) &=&\!\!\! \frac{671}{550837560} \sigma_{11}(n) + \frac{228811}{137709390} \sigma_{11}(n/2) + \frac{19683}{2019737720} \sigma_{11}(n/3) + \frac{610173}{45903130} \sigma_{11}(n/6)\\
&&+ \frac{(5-2n)}{120} \sigma_{9}(n/2) + \frac{1}{264} \sigma(n/3) +\frac{11291}{563856}\tau(n) + \frac{75811}{176205}\tau(n/2) - \frac{11252115}{187952}\tau(n/3)\\
&& - \frac{15109983}{11747}\tau(n/6) -\frac{10665}{143956} \tau_{12,3}(n) + \frac{265545}{71978} \tau_{12,3}(n/2) - \frac{7}{145} \tau_{12,6;1}(n) \\
&&+ \frac{35}{832} \tau_{12,6;2}(n) +\frac{27}{448} \tau_{12,6;3}(n).
\end{eqnarray*}


\begin{rmk}
Formulas for $W_{1,3}^{1,3}(n)$ and $W_{3,1}^{1,3}(n)$ were evaluated by Yao and Xia \cite{yao} using $(p,k)$ parametrization method. Our formulas for these sums presented above are the same as obtained by Yao and Xia. 
Using the same method ($(p,k)$ parametrization), in \cite{kokluce}, K\"okl\"uce derived the convolution sums $W_{a,b}^{1,3}(n)$, where $(a,b) = (1,6), (6,1), (2,3), (3,2)$. Our formulas presented above match with the formulas obtained by K\"okl\"uce modulo the cusp form part. In his formulas, K\"okl\"uce used the following cusp forms given by a linear combination of eta-products: 
\begin{align*}
\sum_{n\ge 1} u_1(n) q^n & = -4 \eta(z)^5 \eta(2z)^5 \eta(3z) \eta(6z) + 105 \eta(z)^6 \eta(3z)^6 + 972 \eta(z) \eta(2z) 
\eta(3z)^5 \eta(6z)^5, \\
\sum_{n\ge 1} u_2(n) q^n & = -72 \eta(z)^5 \eta(2z)^5 \eta(3z) \eta(6z) + 70 \eta(z)^6 \eta(3z)^6 + 24 \eta(z) \eta(2z) 
\eta(3z)^5 \eta(6z)^5, \\
\sum_{n\ge 1} u_3(n) q^n & = 4 \eta(z)^5 \eta(2z)^5 \eta(3z) \eta(6z) + 14 \eta(z)^6 \eta(3z)^6 + 120 \eta(z) \eta(2z) 
\eta(3z)^5 \eta(6z)^5, \\
\sum_{n\ge 1} u_4(n) q^n & = -101 \eta(z)^5 \eta(2z)^5 \eta(3z) \eta(6z) + 105 \eta(z)^6 \eta(3z)^6 - 27 \eta(z) \eta(2z) 
\eta(3z)^5 \eta(6z)^5. 
\end{align*}
For these cases, our formulas involve the cusp forms $\Delta_{6,3}(z), \Delta_{6,3}(2z), \Delta_{6,6}(z)$. 
Comparing our formulas with the corresponding formulas obtained in \cite{kokluce}, we get the following expressions for $u_j(n)$, $1\le j\le 4$ in terms of $\tau_{6,3}(n)$ and $\tau_{6,6}(n)$:
\begin{align*}
u_1(n) & = 49 \tau_{6,3}(n) + 448 \tau_{6,3}(n/2) + 52 \tau_{6,6}(n),\\
3 u_2(n) & = 98 \tau_{6,3}(n) + 896\tau_{6,3}(n/2) - 104 \tau_{6,6}(n),\\
3 u_3(n) & = 28 \tau_{6,3}(n) + 112 \tau_{6,3}(n/2) + 26 \tau_{6,6}(n),\\
u_4(n) & = 56 \tau_{6,3}(n) + 392 \tau_{6,3}(n/2) - 52 \tau_{6,6}(n).
\end{align*}
From these relations, we obtain the newform $\Delta_{6,6}(z)$ in terms of eta-products, which we give below. 
\begin{equation}
\Delta_{6,6}(z) = \eta(z)^5 \eta(2z)^5 \eta(3z) \eta(6z) + 9 \eta(z) \eta(2z) \eta(3z)^5 \eta(6z)^5.
\end{equation}
Note that our  expression for this newform (given in the Appendix) is different from the above expression.
\end{rmk}


\subsection{Examples for the convolution sums $W_{a,b}(n)$} 
We begin by giving the references to the earlier works in evaluating this type of convolution sums. The list may be incomplete.

\begin{center}
{\bf Table 3}. Known convolution sums $W_{a,b}(n)$. 
\smallskip
\begin{tabular}{|c|c|c|l|}
\hline
\hline
Type of $(a,b)$ & $(a,b)$ & Level $N = ab$ & References\\
\hline
$ (1, 1)$& $(1, 1)$&$1$&\cite{{besge},{glaisher},{ramanujan}}\\
\hline
$ (1, p)$&$(1, 2), (1, 3), (1, 5), (1, 7),(1,11)$&$2,3,5,7,11,$&\cite{{aygin}, {chan-cooper},{Cho}, {cooper-toh},{huard},{lemire-williams}, {sarajevo}}\\
&$(1, 13), (1, 17), (1, 23), (1,29), (1,41)$&$13,17,23,29,41$&\\
& $(1,47), (59), (1,71)$ & $47, 59, 71$  &\\
\hline
$ (p_1, p_2)$&$(2, 3), (2, 5), (2,7), (3, 5), (3,7)$&$6,10, 14,15,21$&\cite{{aw3}, {aygin}, {cooper-ye},{rs0}, {sarajevo}}\\
\hline
$ (1, p_1 p_2)$&$(1, 6), (1, 10), (1, 14), (1, 15),(1,21)$&$6,10,14,15,21$&\cite{{aw3}, {aygin},{rs0}, {sarajevo},{royer}}\\
\hline
$ (1, p^i)$&$ (1, 4), (1, 8), (1, 9), (1, 16),$&$4,8,9,16,$&\cite{{aw5},{alaca-kesicioglu},{huard},{w1},{w2}, {tian-yao-xia}}\\
&$ (1, 25), (1, 27), (1, 32)$& $25,27,32$&\\
\hline
$ (1, p_1^i p_2^j )$&$ (1, 12), (1, 18), (1, 20),$&$12,18,20,$&\cite{{aw1},{aw4},{cooper-ye}, {okayama}, {sarajevo}}\\
&$(1, 24), (1,28), (1, 36)$& $24, 28, 36$&\\
\hline
$ (p_1^i, p_2^j )$&$ (3, 4), (2, 9), (4, 5),$&$12,18,20,$&\cite{{aw2},{aw4},{cooper-ye}, {okayama}, {sarajevo}}\\
&$ (3, 8), (4,7), (4, 9)$& $24, 28, 36$&\\
\hline
\hline 
\end{tabular}
\end{center}

The following examples are for the convolution sums obtained in \corref{wab}.  Here we mention one set of examples 
when the level of the modular forms space is 30. 
Let $n\in {\mathbb N}$, then we have the following formulas for the convolution sums $W_{a,b}(n)$ with $ab\vert 30$, $\gcd(a,b)=1$. To get these examples, we make use of the basis of $M_4(30)$ presented in Table 1, which  consists of 22 elements (for the convolution sums when $ab$ is a proper divisor of 30, we use the basis for the level $ab$, which is a subset of these basis elements). Using \corref{wab}, we get the following formulas for all natural numbers $n$:
\begin{align}
W_{15}(n) & = \frac{1}{624}\sigma_3(n)  + \frac{3}{208} \sigma_3(n/3)  + \frac{25}{624} \sigma_3(n/5) + \frac{75}{208} \sigma_3(n/15) +\frac{(5-2n)}{120} \sigma(n) \notag \\
& \quad +\frac{(1-6n)}{24}  \sigma(n/15) - \frac{1}{455} \tau_{4,5}(n)  - \frac{9}{455}\tau_{4,5}(n/3)  - \frac{1}{80} \tau_{4,15;1}(n) - \frac{1}{84}\tau_{4,15;2}(n),\label{w15}
\end{align}
\begin{align}
W_{3,5}(n) &= \frac{1}{624}\sigma_3(n)  + \frac{3}{208} \sigma_3(n/3)  + \frac{25}{624} \sigma_3(n/5) + \frac{75}{208} \sigma_3(n/15) +\frac{(5-6n)}{120} \sigma(n/3) \notag \\
&\quad  +\frac{(1-2n)}{24}  \sigma(n/5)  - \frac{1}{455} \tau_{4,5}(n) - \frac{9}{455}\tau_{4,5}(n/3)  + \frac{1}{80} \tau_{4,15;1}(n) - \frac{1}{84}\tau_{4,15;2}(n),\label{w35}\\
W_{2,15}(n)  &= \frac{1}{3120}\sigma_3(n) +  \frac{1}{780}\sigma_3(n/2) +  \frac{3}{1040}\sigma_3(n/3)  +  \frac{5}{624}\sigma_3(n/5)+ \frac{3}{260}\sigma_3(n/6) \notag \\
& + \frac{5}{156}\sigma_3(n/10) + \frac{15}{208}\sigma_3(n/15)+  \frac{15}{52}\sigma_3(n/30) +\frac{(5-2n)}{120} \sigma(n/2) +\frac{(1-3n)}{24} \sigma(n/15)\notag \\
& - \frac{3}{910}\tau_{4,5}(n)  - \frac{6}{455} \tau_{4,5}(n/2)  - \frac{27}{910} \tau_{4,5}(n/3) - \frac{54}{455} \tau_{4,5}(n/6) - \frac{1}{720} \tau_{4,6}(n)  - \frac{5}{144} \tau_{4,6}(n/5) \notag \\
& + \frac{1}{120} \tau_{4,10}(n) + \frac{3}{40} \tau_{4,10}(n/3) - \frac{3}{560} \tau_{4,15;1}(n) + \frac{3}{140} \tau_{4,15;1}(n/2) - \frac{1}{252} \tau_{4,15;2}(n) \notag \\
&  - \frac{1}{63} \tau_{4,15;2}(n/2) - \frac{1}{336} \tau_{4,30;1}(n) - \frac{1}{120} \tau_{4,30;2}(n), \\
W_{3,10}(n)  &= \frac{1}{3120}\sigma_3(n) +  \frac{1}{780}\sigma_3(n/2) +  \frac{3}{1040}\sigma_3(n/3)  +  \frac{5}{624}\sigma_3(n/5)+ \frac{3}{260}\sigma_3(n/6) \notag \\
& + \frac{5}{156}\sigma_3(n/10) + \frac{15}{208}\sigma_3(n/15)+  \frac{15}{52}\sigma_3(n/30) + \frac{(5-3n)}{120} \sigma(n/3) +\frac{(1-2n)}{24} \sigma(n/10)\notag \\
& - \frac{3}{910}\tau_{4,5}(n)  - \frac{6}{455} \tau_{4,5}(n/2) - \frac{27}{910} \tau_{4,5}(n/3) - \frac{54}{455} \tau_{4,5}(n/6) - \frac{1}{720} \tau_{4,6}(n)\notag \\
&  - \frac{5}{144} \tau_{4,6}(n/5)  - \frac{1}{120} \tau_{4,10}(n) - \frac{3}{40} \tau_{4,10}(n/3) + \frac{3}{560} \tau_{4,15;1}(n) + \frac{3}{140} \tau_{4,15;1}(n/2)
\notag \\
& - \frac{1}{252} \tau_{4,15;2}(n) - \frac{1}{63} \tau_{4,15;2}(n/2) + \frac{1}{336} \tau_{4,30;1}(n) + \frac{1}{120} \tau_{4,30;2}(n),
\\
W_{5,6}(n)  &= \frac{1}{3120}\sigma_3(n) +  \frac{1}{780}\sigma_3(n/2) +  \frac{3}{1040}\sigma_3(n/3)  +  \frac{5}{624}\sigma_3(n/5)+ \frac{3}{260}\sigma_3(n/6) \notag \\
& + \frac{5}{156}\sigma_3(n/10) + \frac{15}{208}\sigma_3(n/15)+  \frac{15}{52}\sigma_3(n/30) + \frac{(1-n)}{24} \sigma(n/5) + \frac{(5-6n)}{120} \sigma(n/6)\notag \\
& - \frac{3}{910}\tau_{4,5}(n)  - \frac{6}{455} \tau_{4,5}(n/2) - \frac{27}{910} \tau_{4,5}(n/3) - \frac{54}{455} \tau_{4,5}(n/6) - \frac{1}{720} \tau_{4,6}(n)\notag \\
&  - \frac{5}{144} \tau_{4,6}(n/5)  - \frac{1}{120} \tau_{4,10}(n) + \frac{3}{40} \tau_{4,10}(n/3) + \frac{3}{560} \tau_{4,15;1}(n) + \frac{3}{140} \tau_{4,15;1}(n/2)\notag \\
& + \frac{1}{252} \tau_{4,15;2}(n) - \frac{1}{63} \tau_{4,15;2}(n/2) + \frac{1}{336} \tau_{4,30;1}(n) - \frac{1}{120} \tau_{4,30;2}(n),
\\
W_{30}(n) &= \frac{1}{3120}\sigma_3(n) +  \frac{1}{780}\sigma_3(n/2) +  \frac{3}{1040}\sigma_3(n/3)  +  \frac{5}{624}\sigma_3(n/5)+ \frac{3}{260}\sigma_3(n/6) \notag \\
& \quad + \frac{5}{156}\sigma_3(n/10) + \frac{15}{208}\sigma_3(n/15)+  \frac{15}{52}\sigma_3(n/30) + \frac{(5-n)}{120} \sigma(n) +\frac{(1-6n)}{24} \sigma(n/30) \notag \\
& \quad 
- \frac{3}{910}\tau_{4,5}(n)  - \frac{6}{455} \tau_{4,5}(n/2) 
- \frac{27}{910} \tau_{4,5}(n/3) - \frac{54}{455} \tau_{4,5}(n/6) - \frac{1}{720} \tau_{4,6}(n) \notag \\
& \quad  - \frac{5}{144} \tau_{4,6}(n/5)  - \frac{1}{120} \tau_{4,10}(n) - \frac{3}{40} \tau_{4,10}(n/3) - \frac{3}{560} \tau_{4,15;1}(n)
 - \frac{3}{140} \tau_{4,15;1}(n/2) \notag \\
& \quad  - \frac{1}{252} \tau_{4,15;2}(n) - \frac{1}{63} \tau_{4,15;2}(n/2) 
- \frac{1}{336} \tau_{4,30;1}(n) - \frac{1}{120} \tau_{4,30;2}(n).
\end{align}

\begin{rmk}
We note that the identities for $W_{15}(n)$ and $W_{3,5}(n)$  were evaluated earlier by the first two authors in \cite[Theorem 2.2]{rs0} by a different method. However, the coefficients of the newforms $\Delta_{4, 15: 1} (z)$ and $\Delta_{4, 15: 2}(z),$ were incorrectly given in those formulas (coefficients were interchanged). The above identities \eqref{w15} and \eqref{w35} rectify this mistake. 
\end{rmk}

\begin{rmk}
In \cite{{aat}, {t1}, {t2}}, the authors consider evaluation of the sum $W_{a,b}(n)$ using the theory of modular forms for several cases of $ab$ and in \cite{t3}, the author studied the same for general $ab$. Further, in \cite{t4}, the author evaluates the convolution sums $W_{a,b}^{1,3}(n)$ for some cases of $ab$ and all the above works make use of the theory of modular forms to evaluate these sums. Our approach in this paper is different from their method. In all the works mentioned above, the authors make use of these convolution sums to find formulas for the number of representations of a natural number by certain quadratic forms in 8 or 12 variables. It is to be noted that the theory of modular forms can be used directly to find these formulas (for the representation numbers) as demonstrated in our earlier work \cite{rss}.
\end{rmk}

\subsection{Examples for the convolution sums $\sum_{al+bm=n}l\sigma(l)\sigma_{k-1}(m)$}

In this section, we give some examples for \thmref{3.4}. We take $(a,b) = (1,2), (1,3)$ when $k=4,6,8$ and for 
$k=10$, we take $(a,b) = (1,2)$. Basis for the corresponding space of modular forms are listed in Table 1. 
Our method gives the following convolution sums for all natural numbers $n$. 
\begin{align}
\sum_{l,m\in {\mathbb N}\atop{l+2m=n}} l\sigma(l) \sigma_{3}(m) & = \frac{n}{720} \sigma_5(n)  + \frac{n}{36} \sigma_5(n/2) - \frac{n^2}{40}\sigma_3(n/2) - \frac{n}{240} \sigma(n) + \frac{1}{360} \tau_{8,2}(n),\\
\sum_{l,m\in {\mathbb N}\atop{l+3m=n}} l\sigma(l) \sigma_{3}(m) &= \frac{n}{3120}\sigma_5(n) + \frac{3n}{104}\sigma_5(n/3) 
-\frac{n^2}{40}\sigma_3(n/3) -\frac{n}{240}\sigma(n) + \frac{n}{936}\tau_{6,3}(n) \notag\\
& \quad \qquad + \frac{1}{360}\tau_{8,3}(n),\\
\sum_{l,m\in {\mathbb N}\atop{l+2m=n}} l\sigma(l) \sigma_{5}(m) &= \frac{n}{8568}\sigma_7(n) + \frac{2n}{204}\sigma_7(n/2) - \frac{n^2}{84} \sigma_5(n/2) + \frac{n}{504}\sigma(n)  - \frac{n}{1632} \tau_{8,2}(n)\notag\\
&\quad  \qquad - \frac{1}{672}\tau_{10,2}(n),\\
\sum_{l,m\in {\mathbb N}\atop{l+3m=n}} l\sigma(l) \sigma_{5}(m) &= -\frac{n}{78720} \sigma_7(n) - \frac{273n}{26240}\sigma_7(n/3) -\frac{n^2}{84}\sigma_5(n/3) + \frac{n}{504}\sigma(n) + \frac{7n}{13120}\tau_{8,3}(n)\notag\\
& \quad \qquad - \frac{1}{3024}\tau_{10,3;1}(n) - \frac{1}{864}\tau_{10,3;2}(n), \\
\sum_{l,m\in {\mathbb N}\atop{l+2m=n}} l\sigma(l) \sigma_{7}(m) &= \frac{n}{74400} \sigma_9(n) + \frac{17n}{3720}\sigma_9(n/2) + \frac{n^2}{144}\sigma_7(n/2) - \frac{n}{480}\sigma(n)-\frac{n}{2480}\tau_{10,2}(n) \notag \\
&\quad \qquad +\frac{1}{600}\tau(n)+ \frac{2}{45} \tau(n/2),
\end{align}
\begin{align}
\sum_{l,m\in {\mathbb N}\atop{l+3m=n}} l\sigma(l) \sigma_{7}(m) &= \frac{n}{1610400}\sigma_9(n) + \frac{123n}{26840}\sigma_9(n/2) + \frac{n^2}{144}\sigma_7(n/2) - \frac{n}{480}\sigma(n) + \frac{n}{8235}\tau_{10,3;1}(n) \notag \\
 & \quad \qquad - \frac{7n}{23760}\tau_{10,3;2}(n) + \frac{1}{850} \tau(n) -  \frac{31}{170} \tau(n/3) + \frac{1}{2040} \tau_{12,3}(n),\\
\sum_{l,m\in {\mathbb N}\atop{l+2m=n}} l\sigma(l) \sigma_{9}(m) &=\frac{n}{547272}\sigma_{11}(n)+ \frac{31n}{12438}\sigma_{11}(n/2)- \frac{n^2}{220}\sigma_9(n/2) + \frac{n}{264}\sigma(n) - \frac{21n}{31168}\tau(n/2) \notag \\
&\quad \qquad  - \frac{1}{704} \tau_{14,2;1}(n)  -\frac{1}{576} \tau_{14,2;2}(n).
\end{align}

\smallskip

\begin{rmk}
We have used some of the formulas  for $W_{a,b}^{1,k-1}(n)$ presented in \S 3.1 to get the above expressions. 
\end{rmk}

\smallskip

\subsection{Examples for the convolution sums $\sum_{al+bm=n}l\sigma(l)\sigma(m)$} ~~In this case the corresponding  
vector space of modular forms has weight $6$. We consider the levels $2, 3, 4, 6$ and obtain the following sums. The basis elements for these levels appear in Table 1. Also, we give formulas for the cases $(a,b) = (1,2), (1,3), (1,4), (1,6), (2,3)$. For these values, the formulas for the pair $(b,a)$ can be obtained by observing the following relation valid for relatively prime integers $a$ and $b$ (for $n\in {\mathbb N}$): 

\begin{equation}\label{52}
b\sum_{l,m\in {\mathbb N}\atop{bl+am=n}} l\sigma(l) \sigma(m) = n W_{a,b}(n) - a\sum_{l,m\in {\mathbb N}\atop{al+bm =n}}
 l\sigma(l) \sigma(m).
 \end{equation}
 
 \smallskip

Let $n\in {\mathbb N}$. Then we have 

\begin{align}
\sum_{l,m\in {\mathbb N}\atop{l+2m=n}} l\sigma(l) \sigma(m) & = \frac{n}{24}\sigma_3(n) +  \frac{n}{6}\sigma_3(n/2) - \frac{(2n^2-n)}{24} \sigma(n) - \frac{n^2}{12} \sigma(n/2),\label{53}\\
\sum_{l,m\in {\mathbb N}\atop{l+3m=n}} l\sigma(l) \sigma(m) & =  \frac{n}{48}\sigma_3(n) +  \frac{3n}{16}\sigma_3(n/3) - \frac{(4n^2-3n)}{72} \sigma(n) - \frac{n^2}{12} \sigma(n/3) -\frac{1}{144} \tau_{6,3}(n),\\
\sum_{l,m\in {\mathbb N}\atop{l+4m=n}} l\sigma(l) \sigma(m) & =  \frac{n}{96}\sigma_3(n) +  \frac{n}{32}\sigma_3(n/2) +  \frac{n}{6}\sigma_3(n/4) - \frac{(n^2-n)}{24} \sigma(n) 
- \frac{n^2}{12} \sigma(n/4) -\frac{1}{96} \tau_{6,4}(n),\\
\sum_{l,m\in {\mathbb N}\atop{l+6m=n}} l\sigma(l) \sigma(m) & =  \frac{n}{240}\sigma_3(n) +  \frac{n}{60}\sigma_3(n/2) +  \frac{3n}{80}\sigma_3(n/3) +  \frac{3n}{20}\sigma_3(n/6)  - \frac{(2n^2-3n)}{72} \sigma(n) \notag \\
&\quad  - \frac{n^2}{12} \sigma(n/6) -\frac{n}{240} \tau_{4,6}(n) -\frac{1}{144} \tau_{6,3}(n)  -\frac{1}{18} \tau_{6,3}(n/2) -\frac{1}{144} \tau_{6,6}(n) ,
\end{align}
\begin{align}
\sum_{l,m\in {\mathbb N}\atop{2l+3m=n}} l\sigma(l) \sigma(m) & = \frac{n}{480}\sigma_3(n) +  \frac{n}{120}\sigma_3(n/2) +  \frac{3n}{160}\sigma_3(n/3) +  \frac{3n}{40}\sigma_3(n/6)  - \frac{(4n^2-3n)}{144} \sigma(n/2) \notag \\
&\quad  - \frac{n^2}{48} \sigma(n/3) -\frac{n}{480} \tau_{4,6}(n) -\frac{1}{288} \tau_{6,3}(n)  -\frac{1}{36} \tau_{6,3}(n/2) -\frac{1}{288} \tau_{6,6}(n).
\end{align}

\smallskip

Comparing the last two expressions, we obtain the following relation for all $n\in {\mathbb N}$.

\begin{align}
2\!\!\!\!\sum_{l,m\in {\mathbb N}\atop{2l+3m=n}} \!\!\! l\sigma(l) \sigma(m) - \!\!\!\! \sum_{l,m\in {\mathbb N}\atop{l+6m=n}} \!\!\! l\sigma(l) \sigma(m) &=  \frac{(2n^2-3n)}{72} \sigma(n)  - \frac{(4n^2-3n)}{72} \sigma(n/2)  - \frac{n^2}{24} \sigma(n/3) + \frac{n^2}{12} \sigma(n/6).
\end{align}
Note that the right-hand side of the above identity involves only the divisor function $\sigma(n)$. 

\smallskip

\begin{rmk}
The convolution sum given by \eqref{53} is the same as obtained in \cite[Theorem 4.1]{auw}. By using \eqref{52} and the sum $W_2(n)$, we can derive the sum $\displaystyle{\sum_{l,m\in {\mathbb N}\atop{2l+m=n}}} l\sigma(l) \sigma(m)$, which is exactly the second sum appearing in \cite[Theorem 4.1]{auw}.
\end{rmk}

\noindent {\bf Acknowledgements}. We have used the L-functions and Modular Forms Database (LMFDB) \cite{lmfdb}. We have also used the open-source mathematics software SAGE (www.sagemath.org) \cite{sage} to perform our calculations. 
We are grateful to the referee for pointing out an inaccuracy in our notation for the extended operator, which has been rectified. We are also thankful to his/her valuable comments and suggestions which improved the presentation. 

\noindent {\small {\bf Data Availability Statement}: Data sharing not applicable to this article as no datasets were generated or analysed during the current study.}

\section*{Appendix}

Here we give the expressions for the newforms appearing in Table 1:

\noindent 
{\bf Weight $4$:}
\begin{eqnarray*}
\Delta_{4,5}(z) &=&  \eta^4(z)\eta^4(5z) ,\\
\Delta_{4,6}(z) &=&  \eta^2(z)\eta^2(2z)\eta^2(3z)\eta^2(6z),\\
\Delta_{4,10}(z) &=& -\frac{3}{2}\Delta_{4,5}(z)  -4 \Delta_{4,5}(2z) + \frac{5}{2} \frac{\eta^2(z)\eta^8(5z)}{\eta^2(10z)},\\
\Delta_{4,15;1}(z) &=& \Delta_{4,5}(z) + 9 \Delta_{4,5}(3z) +5 \Delta^2_{2,15}(z) + 2 \frac{\eta^5(z)\eta^5(15z)}{\eta(3z)\eta(5z)},\\
\end{eqnarray*}
where $\Delta_{2,15}(z) ~=~\eta(z) \eta(3z) \eta(5z) \eta(15z)$,
\begin{eqnarray*}
\Delta_{4,15;2}(z) &=& \Delta_{4,5}(z) + 9 \Delta_{4,5}(3z) + 7 \Delta^2_{2,15}(z),\\
\Delta_{4,30;1}(z) &=& q - 2q^2 + 3q^3 + 4q^4 + 5q^5 - 6q^6 + 32 q^7 - 8q^8 + 9q^9 - 10q^{10} - 60q^{11} + 12q^{12}\\
&& - 34q^{13} - 64q^{14} + 15q^{15} + 16q^{16} + 42q^{17} - 18q^{18} - 76q^{19} + 20q^{20} + 96q^{21} \\
&& + 120q^{22} - 24q^{24} + 25q^{25} + O(q^{26}),
\end{eqnarray*}
\begin{eqnarray*}
\Delta_{4,30;2}(z) &=& q + 2q^2 + 3q^3 + 4q^4 - 5q^5 + 6q^6 - 4 q^7 + 8q^8 + 9q^9 - 10q^10 - 48q^{11} + 12q^{12}\\
&& + 2q^{13} - 8q^{14} - 15q^{15} + 16q^{16} - 114q^{17} + 18q^{18} + 140q^{19} - 20q^{20} - 12 q^{21}\\
&&  - 96q^{22} + 72q^{23} + 24q^{24} + 25q^{25} + O(q^{26}).
\end{eqnarray*}


\noindent 
{\bf Weight $6$:}
\begin{eqnarray*}
\Delta_{6, 3} (z) &=& \eta^6(z)\eta^6(3z),\\
\Delta_{6, 4} (z) &=& \eta^{12}(2z),\nonumber \\
\Delta_{6, 6} (z) &=& \frac{-7}{480} \left\{ \frac{50}{1911} E_6(z) + \frac{160}{1911} E_6(2z) + \frac{135}{637} E_6(3z) + \frac{432}{637} E_6(6z) + \frac{840}{13} \Delta_{6,3}(z)  \right. \nonumber \\
&&  \left. \hspace{5.8cm} + \frac{7680}{13} \Delta_{6,3}(2z)  + \frac{1}{2} DE_4(z) - E_2(6z)E_4(z)\right\}.  \nonumber 
\end{eqnarray*}


\noindent 
{\bf Weight $8$:}
\begin{eqnarray*}
\Delta_{8, 2}(z) \!\!\! &=&\!\!\! \eta^{8}(z)\eta^8(2z),\qquad \\
\Delta_{8, 3}(z) \!\!\! &=&\!\!\! \eta^{12}(z)\eta^4(3z)+  81\eta^6(z)\eta^4(3z)\eta^6(9z)}+18 \eta^9(z){\eta^4(3z)\eta^3(9z),\qquad \\
\Delta_{8, 6}(z) \!\!\! &=&\!\!\! \frac{1}{240}(E_4(z)E_4(6z)-E_4(2z)E_4(3z)).
\end{eqnarray*}


\noindent 
{\bf Weight $10$:}
\begin{eqnarray*}
\Delta_{10,2}(z) &=&\!\!\! \frac{31}{63}\frac{\eta^{16}(2z)}{\eta^{8}(z)}E_6(z) \!-\! \frac{4}{2079}E_{10}(z) \!+\! \frac{4}{2079} E_{10}(2z),\\ 
\Delta_{10,3;1}(z) &=& - \frac{45}{17248}E_{10}(z) + \frac{45}{17248}E_{10}(3z) + \frac{3355}{240 \times 5292} \Big( E_4(z) - E_4(3z) \Big)  E_6(z) \\
&& - \frac{61}{189}\Delta_{6,3}(z)E_4(z),\\
\Delta_{10,3;2}(z) &=& - \frac{9}{4312}E_{10}(z) + \frac{9}{4312} E_{10}(3z) + \frac{671}{240 \times 1323} \Big( E_4(z) - E_4(3z) \Big)  E_6(z) \\
&& - \frac{11}{189}\Delta_{6,3}(z)E_4(z),
\end{eqnarray*}
\begin{eqnarray*}
\Delta_{10,4}(z) &=& q + 228q^3 - 666q^5 - 6328q^7 + 32301q^9 - 30420q^{11} - 32338q^{13} - 151848q^{15}\\
&& + 590994q^{17} + 34676q^{19} - 1442784q^{21} + 1048536q^{23} - 1509569q^{25} + O(q^{26}),\\
\Delta_{10,6}(z) &=& -\frac{143}{243} \Delta_{10,2}(z) -297 \Delta_{10,2}(3z) + \frac{11}{9} \Delta_{10,3;1}(z)  +  \frac{1408}{27} \Delta_{10,3;1}(2z)+\frac{7}{18} \Delta_{10,3;2}(z) \\
&& -\frac{896}{27} \Delta_{10,3;3}(2z)  - \frac{11}{486} \Delta_{4,6}(z)E_6(z).
\end{eqnarray*}


\noindent 
{\bf Weight $12$:}
\begin{eqnarray*}
\Delta_{12,3}(z) &=& \frac{98}{81} \Delta(z) - \frac{275562}{81}\Delta(3z) -\frac{17}{81} \Delta_{6,3}(z)E_6(z), \\
\Delta_{12,4}(z) &=& q - 516q^3 - 10530q^5 + 49304q^7 + 89109q^9 - 309420q^{11} - 1723594q^{13}\\
&& + 5433480q^{15} - 2279502q^{17} + 4550444q^{19} - 25440864q^{21} - 7282872q^{23} \\
&& + 62052775q^{25} + O(q^{26}),
\end{eqnarray*}
\begin{eqnarray*}
\Delta_{12,6;1}(z) &=&\!\!\!-\frac{203}{323}\Delta(z) - \frac{928}{323}\Delta(2z) - \frac{147987}{323}\Delta(3z) - \frac{676512}{323}\Delta(6z) - \frac{2727}{2261}\Delta_{12,3}(z)\\
&& + \frac{21600}{323}\Delta_{12,3}(2z) + \frac{783}{266}\Delta_{6,3}(z)E_6(6z) - \frac{29}{266}\Delta_{6,3}(z)E_6(2z),\\
\!\!\!\!\!\!\!\Delta_{12,6;2}(z) &=&\!\!\!\!-\frac{16051}{16150}\Delta(z) \!-\! \frac{153728}{8075}\Delta(2z) \!+\! \frac{4781511}{16150}\Delta(3z) \!+\! \frac{340028928}{8075}\Delta(6z) \!-\! \frac{168021}{129200}\Delta_{12,3}(z)~~ \\
&& + \frac{27648}{323}\Delta_{12,3}(2z) + \frac{10989}{3325}\Delta_{6,3}(z)E_6(6z) - \frac{27}{3325}\Delta_{6,3}(z)E_6(2z) - \frac{1}{400}\Delta_{4,6}(z)E_8(z),\\
\Delta_{12,6;3}(z) &=&\!\!\!-\frac{679}{5814}\Delta(z) + \frac{112384}{8721}\Delta(2z) + \frac{311283}{646}\Delta(3z) - \frac{11477376}{323}\Delta(6z) + \frac{6223}{5168}\Delta_{12,3}(z)\\
&& - \frac{25600}{323}\Delta_{12,3}(2z) - \frac{27}{133}\Delta_{6,3}(z)E_6(6z) + \frac{407}{3591}\Delta_{6,3}(z)E_6(2z) + \frac{1}{432}\Delta_{4,6}(z)E_8(z).\\
\end{eqnarray*}


\begin{thebibliography}{10}
\bibitem{aat}
A. Alaca, S. Alaca and E. Ntienjem, {\em The convolution sum $\sum_{al+bm=n}\sigma(l)\sigma(m)$ for $(a,b) = (1,28), (4,7),(1,14),(2,7),(1,7)$}, Kyungpook Math. J. {\bf 59} (2019), no. 3, 377--389.
\bibitem{aw1} 
A. Alaca, S. Alaca and K. S. Williams, {\em Evaluation of the convolution sums 
$\sum_{l+12m=n}\sigma(l)\sigma(m)$ and $\sum_{3l+4m=n}\sigma(l)\sigma(m)$}, Adv. Theor. Appl. Math. {\bf 1} (2006), no. 1, 27--48. 
\bibitem{aw2} 
A. Alaca, S. Alaca and K. S. Williams, {\em Evaluation of the convolution sums 
$\sum_{l+18m=n}\sigma(l)\sigma(m)$ and $\sum_{2l+9m=n}\sigma(l)\sigma(m)$}, Int. Math. Forum {\bf 2} (2007), no. 1-4, 45--68.
\bibitem{aw4}
A. Alaca, S. Alaca and K. S. Williams, {\em Evaluation of the convolution sums 
$\sum_{l+24m=n}\sigma(l)\sigma(m)$ and $\sum_{3l+8m=n}\sigma(l)\sigma(m)$}, Math. J. Okayama Univ. {\bf 49} (2007), 93--111. 
\bibitem{aw5} 
A. Alaca, S. Alaca and K. S. Williams, {\em The convolution sum
$\sum_{m<n/16}\sigma(m)\sigma(n-16m)$},  Canad. Math. Bull. {\bf 51} (2008), no. 1, 3--14.
\bibitem{alaca-kesicioglu}
S. Alaca and Y. Kesicioglu,  {\em Evaluation of the convolution sums $\sum_{l+27m=n}\sigma(l)\sigma(m)$ and $\sum_{l+32m=n}\sigma(l)\sigma(m)$}, J. Number Theory {\bf 1} (2016),  1–13.
\bibitem{auw}
S. Alaca, F. Uygul and  K. S. Williams, {\em Some arithmetic identities involving divisor functions}, Funct.  Approx. Comment. Math. {\bf 46} (2012), no. 2,  261--271.
\bibitem{aw3} 
S. Alaca and K. S. Williams, {\em Evaluation of the convolution sums 
$\sum_{l+6m=n}\sigma(l)\sigma(m)$ and $\sum_{2l+3m=n}\sigma(l)\sigma(m)$}, J. Number Theory {\bf 124} (2007), no. 2, 491--510.
\bibitem{aygin}
Z.S. Aygin, {\em Eisenstein series and convolution sums}, Ramanujan J. {\bf 48}(3) (2019) 495–508.
\bibitem{besge}
M. Besge: {\em Extrait d’une lettre de M. Besge a\' M. Liouville}, J. Math. Pures Appl. {\bf 7}, 256 (1862).
\bibitem{chan-cooper}
H.H. Chan and S. Cooper, {\em Powers of theta functions}, Pac. J. Math. {\bf 235} (2008),  1–14.
\bibitem{cw2}
N. Cheng and K. S. Williams, {\em  Evaluation of some convolution sums involving the sum of divisors functions}, Yokohama Math. J. {\bf  52}  (2005), no. 1, 39--57.
\bibitem{Cho}
B. Cho,  {\em Convolution sums of a divisor function for prime levels},  Int. J. Number Theory, {\bf 16} (2020), 537--546.
\bibitem{cooper-toh}
S. Cooper and P.C. Toh, {\em Quintic and septic Eisenstein series}, Ramanujan J. {\bf 19}  (2009), 163–181.
\bibitem{cooper-ye}
S. Cooper and D. Ye, {\em Evaluation of the convolution sums
$\sum_{l+20m=n}\sigma(l)\sigma(m)$, $\sum_{4l+5m=n}\sigma(l)\sigma(m)$ and $\sum_{2l+5m=n}\sigma(l)\sigma(m)$}, Int. J. Number Theory, {\bf 6}, (2014) 1385–1394. 
\bibitem{glaisher}
 J.W.L. Glaisher, {\em On the square of the series in which the coefficients are the sums of the divisors of the exponents}, Mess. Math. {\bf 14}, (1885) 156–163.
\bibitem{huard} 
J. G. Huard, Z. M. Ou, B. K. Spearman and K. S. Williams, {\em Elementary evaluation of certain convolution sums involving divisor functions,} in Number Theory for the Millennium, II (Urbana, IL, 2000) (A. K. Peters, Natick, MA, 2002), 229--274.
\bibitem{kokluce} 
B. K\"okl\"uce, {\em Evaluation of four convolution sums and representation of integers by certain quadratic forms in twelve variables,} J. Math. Comput. Sci. {\bf 12} (2022), Article ID 52, DOI : 10.28919/jmcs/7016.
\bibitem{lahiri1}
D. B. Lahiri, {\em On Ramanujan's function $\tau(n)$ and the divisor function $\sigma_k(n)$-I},
Bull. Calcutta Math. Soc. {\bf 38} (1946), 193--206.
\bibitem{lahiri2}
D. B. Lahiri, {\em On Ramanujan's function $\tau(n)$ and the divisor function $\sigma_k(n)$, Part II},
Bull. Calcutta Math. Soc. {\bf 39} (1947), 33--52.
\bibitem{lemire-williams} 
 M. Lemire and K.S. Williams, {\em Evaluation of two convolution sums involving the sum of divisor functions}, Bull. Aust. Math. Soc. {\bf 73} (2005), 107–115.
\bibitem{lmfdb}
LMFDB, 
The database of L-functions, modular forms, and related objects, http:/\!/www.lmfdb.org/ 
\bibitem{melfi1}
G.Melfi, {\em Some problems in elementary number theory and modular forms}, Ph.D.Thesis, University of Pisa,1997.
\bibitem{melfi2}
G. Melfi, {\em On some modular identities}, in Number Theory, Diophantine equations, Computational 
and Algebraic Aspects: Proceedings of the International Conference held in Eger, Hungary. 
Walter de Grutyer and Co. (1998), 371--382.
\bibitem{t1}
E. Ntienjem, {\em Evaluation of convolution sums involving the sum of divisors function for levels 48 and 64}, 
Integers {\bf 17} (2017), Paper No. A49, 22 pp.
\bibitem{t2}
E. Ntienjem, {\em Evaluation of the convolution sum involving the sum of divisors function for 22, 44 and 52}, 
Open Math. {\bf 15} (2017), no. 1, 446--458.
\bibitem{t3}
E. Ntienjem, {\em Elementary evaluation of convolution sums involving the divisor function for a class of levels}, 
North-West. Eur. J. Math. {\bf 5} (2019), 101--165.
\bibitem{t4}
E. Ntienjem, {\em Evaluation of convolution sums entailing mixed divisor functions for a class of level}, New Zealand J. Math. {\bf 50} (2020), 125--180.
\bibitem{rs0}
B. Ramakrishnan and Brundaban Sahu, {\em Evaluation of the convolution sums
$\sum_{l+15m=n}\sigma(l)\sigma(m)$ and $\sum_{3l+5m=n}\sigma(l)\sigma(m)$ and an application}, Int. J. Number Theory {\bf 9}, (2013) 799–809. 
\bibitem{okayama}
B. Ramakrishnan and Brundaban Sahu, {\em Evaluation of convolution sums and some remarks on cusp forms of weight $4$
and level $12$}, Math. J Okayama Univ. {\bf 59} (2017), [2016 on cover], 71--79.
\bibitem{rss}
B. Ramakrishnan, Brundaban Sahu and Anup Kumar Singh, {\em On the number of representations by certain octonary
quadratic forms with coefficients 1, 2, 3, 4 and 6}, Int. J. Number Theory {\bf 14} (2018), 751--812.
\bibitem{sarajevo}
B. Ramakrishnan, Brundaban Sahu and Anup Kumar Singh, {\em On the number of representations of certain quadratic forms in $8$ variables},  {\sl Automorphic Forms and Related Topics}, Samuele Anni et al. (eds.), 215--224, Contemp. Math. {\bf 732},
Amer. Math. Soc. Providence, RI, 2019.
\bibitem{ramanujan}
S. Ramanujan, {\em On certain arithmetical functions,} Trans. Cambridge Philos. Soc. 22 (1916) 159--184.
\bibitem{royer}
E. Royer, {\em Evaluating convolution sums of the divisor function by 
quasimodular forms},  Int. J. Number Theory  {\bf 3}  (2007), no. 2, 231--261. 
\bibitem{sage}
SageMath, The Sage Mathematics Software System (Version 6.9), The Sage Developers, 2021, 
https://www.sagemath.org
\bibitem{serre}
J.-P. Serre, {A course in Arithmetic}, Springer-Verlag, Berlin-New York, 1977. 
\bibitem{tian-yao-xia}
X.L. Tian, O.X.M. Yao and E.X.W.Xia, {\em Evaluation of the convolution sum $\sum_{l+25m=n}\sigma(l)\sigma(m)$}, Int. J. Number Theory {\bf 10}, (2014) 1421-1430.
\bibitem{w1}
K. S. Williams, {\em The convolution sum $\sum_{m<n/9}\sigma(m)\sigma(n-9m),$}
Int. J. Number Theory {\bf 1} (2005), no. 2, 193--205. 
\bibitem{w2}
K. S.  Williams, {\em  The convolution sum $\sum_{m<n/8} \sigma(m)\sigma(n-8m)$}, 
Pacific J. Math. {\bf 228} (2006), no. 2, 387--396. 
\bibitem{williams-book}
K. S. Williams, Number Theory in the spirit of Liouville, {\sl London Mathematical Student Texts} {\bf 76}, Cambridge Univ. Press, 2011.
\bibitem{yao}
O. X. M. Yao and E. X. W. Xia {\em Evaluation of the convolution sum $\sum_{i+3j=n}\sigma(m)\sigma_3(j)$ and $\sum_{3i+j=n}\sigma(m)\sigma_3(j)$}, Int. J. Number Theory {\bf 10} (2014), No. 1, 115--123.
\end{thebibliography}
\end{document}